\documentclass[a4paper, 12pt]{amsart}

\usepackage{amsmath}
\usepackage{amssymb}
\usepackage{amsthm}
\usepackage{graphicx}
\usepackage[dvipdfmx]{hyperref}

\setcounter{tocdepth}{1}

\oddsidemargin=-0.1in
\evensidemargin=-0.1in
\topmargin=-0.3in
\textwidth=6.5in
\textheight=9.7in

\makeatletter

\@addtoreset{equation}{section}
\makeatother 

\theoremstyle{plain}
\newtheorem{thm}{Theorem}[section]
\newtheorem*{thm*}{Theorem}
\newtheorem{prop}[thm]{Proposition}
\newtheorem{lem}[thm]{Lemma}

\theoremstyle{definition}
\newtheorem{ex}[thm]{Example}%[section]

\theoremstyle{remark}
\newtheorem{rem}[thm]{Remark}%[section]

\renewcommand{\epsilon}{\varepsilon}

\newcommand{\diam}{\operatorname{diam}}

\newcommand{\Lip}{\mathrm{Lip}}

\newcommand{\rarrow}{\rightarrow}
\newcommand{\orarrow}{\overrightarrow}

\newcommand{\olarrow}{\overleftarrow}

\usepackage{latexsym}

%%%%%%%%%%%%%%%%%%%%%%%%%%%%
%%%%%%%%%%%%%%%%%%%%%%%%%%%%
%%%%%%%%%%%%%%%%%%%%%%%%%%%%
\title[Maximal diameter theorem for directed graphs]{Maximal diameter theorem for directed graphs of positive Ricci curvature}

\author{Ryunosuke Ozawa}
\address{Department of Mathematics, National Defense Academy of Japan, 1-10-20 Hashirimizu, Yokosuka, 239-8686, Japan}
\email{rozawa@nda.ac.jp}

\author{Yohei Sakurai}
\address{Department of Mathematics, Saitama University, 255 Shimo-Okubo, Sakura-ku, Saitama-City, Saitama, 338-8570, Japan}
\email{ysakurai@rimath.saitama-u.ac.jp}

\author{Taiki Yamada}
\address{Shimane University, 1060 Nishikawatsu-cho, Matsue, Shimane, 690-8504, Japan}
\email{taiki\_yamada@riko.shimane-u.ac.jp}

\subjclass[2010]{Primary 05C20, 05C12, 05C81, 53C21, 53C23}
\keywords{Directed graph; Ricci curvature; Comparison geometry; Diameter; Spectrum}
\date{April 22, 2021}

%%%%%%%%%%%%%%%%%%%%%%%%%%%%
\begin{document}
\maketitle

\begin{abstract}
In a previous work,
the authors \cite{OSY} have introduced a Lin-Lu-Yau type Ricci curvature for directed graphs,
and obtained a diameter comparison of Bonnet-Myers type.
In this paper,
we investigate rigidity properties for the equality case,
and conclude a maximal diameter theorem of Cheng type.
\end{abstract}

%%%%%%%%%%%%%%%%%%%%%%%%%%%%
%%%%%%%%%%%%%%%%%%%%%%%%%%%%
%%%%%%%%%%%%%%%%%%%%%%%%%%%%
\section{Introduction}

For a Riemannian manifold of positive Ricci curvature,
the Bonnet-Myers theorem asserts that
its diameter is bounded from above by that of a corresponding standard sphere.
Moreover,
the Cheng maximal diameter theorem says that
the equality in Bonnet-Myers theorem holds if and only if it is isometric the sphere.
In this article,
we are concerned with their discrete analogues.
In \cite{OSY},
the authors have introduced a Lin-Lu-Yau type Ricci curvature for directed graphs,
and produced a Bonnet-Myers type diameter comparison theorem.
We now aim to examine its equality case.
In that case,
we derive several results on geometric structure,
and some analytic results for the Chung Laplacian.

%%%%%%%%%%%%%%%%%%%%%%%%%%%%
\subsection{Main results}

Recently,
there are some attempts to introduce the notion of the Ricci curvature for discrete spaces,
and the Ricci curvature for (undirected) graphs introduced by Lin-Lu-Yau \cite{LLY} is one of the well-studied objects (see also a pioneering work of Ollivier \cite{O}).
It is well-known that
a lower Ricci curvature bound of them leads us to various geometric and analytic consequences (see e.g., \cite{AS}, \cite{BJL}, \cite{BRT}, \cite{CKKLMP}, \cite{JL}, \cite{LLY}, \cite{MW}, \cite{P}, and so on).
In \cite{LLY},
they have provided a discrete analogue of Bonnet-Myers theorem in Riemannian geometry.
Furthermore,
Cushing et al. \cite{CKKLMP} have established that of Cheng maximal diameter theorem for regular graphs.
They have investigated their geometric structure in the equality case,
and concluded a classification of self-centered regular graphs based on \cite{KMS}.

In \cite{OSY},
the authors have generalized the Lin-Lu-Yau Ricci curvature for directed graphs referring to the formulation of the Laplacian by Chung \cite{C1}, \cite{C2},
and extended the Bonnet-Myers theorem in \cite{LLY} to the directed setting.
The purpose of this paper is to observe rigidity phenomena in the equality case.
In order to state our main results,
we briefly recall some notions on directed graphs (more precisely, see Sections \ref{sec:Preliminaries} and \ref{sec:Curvatures}).
Let $(V,\mu)$ denote a simple,
strongly connected,
finite weighted directed graph,
where $V$ denotes the vertex set,
and $\mu:V\times V\to [0,\infty)$ is the (non-symmetric) edge weight.
We denote by $d:V\times V\to [0,\infty)$ the (non-symmetric) distance function on $V$.
For $x,y\in V$ with $x\neq y$,
let $\kappa(x,y)$ stand for the \textit{Ricci curvature} introduced in \cite{OSY} (see Subsection \ref{sec:Ricci curvature}),
and set
\begin{equation}\label{eq:infconst}
K:=\inf_{x\rarrow y}\kappa(x,y),
\end{equation}
where the infimum is taken over all $x,y\in V$ with $x\rarrow y$.
Here $x\rarrow y$ means that
there exists a directed edge from $x$ to $y$.
Also,
for $x,y\in V$,
let $\mathcal{H}(x,y)$ denote the \textit{mixed asymptotic mean curvature} introduced in \cite{OSY} (see Subsection \ref{sec:Mean curvature}),
and set
\begin{equation}\label{eq:supconst}
\Lambda:=\sup_{x,y\in V}\mathcal{H}(x,y).
\end{equation}
Notice that
$\Lambda \geq 2$ in general.
The diameter comparison in \cite{OSY} can be stated as follows (see \cite[Theorem 8.3]{OSY}, and also Theorem \ref{thm:LLY diameter comparison} below):
\begin{thm}[\cite{OSY}]\label{thm:BM}
Let $(V,\mu)$ denote a simple,
strongly connected,
finite weighted directed graph.
If $K>0$,
then we have
\begin{equation}\label{eq:BM}
\diam V\leq \frac{\Lambda}{K}.
\end{equation}
\end{thm}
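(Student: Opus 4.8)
The plan is to follow Ollivier's transportation-theoretic proof of the Bonnet--Myers theorem, adapted to the directed and asymptotic framework underlying $\kappa$ and $\mathcal{H}$. The whole statement reduces to the single pointwise transportation inequality
\begin{equation*}
K\, d(x,y) \leq \mathcal{H}(x,y) \qquad \text{for all } x,y \in V,\ x \neq y.
\end{equation*}
Indeed, once this is available, the definition $\Lambda=\sup_{x,y}\mathcal{H}(x,y)$ gives $d(x,y) \leq \Lambda/K$ for every pair, and taking the maximum over $x,y$ yields $\diam V \leq \Lambda/K$. Thus no separate treatment of a diameter-realizing pair is required; everything passes from local to global through this one inequality.

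To prove the displayed inequality I would combine an upper and a lower estimate of the $L^1$-transportation distance $W_1(m^\epsilon_x,m^\epsilon_y)$ between the idle random-walk measures that define $\kappa$. The upper estimate is essentially the definition of the curvature: along each directed edge one has the contraction $W_1(m^\epsilon_x,m^\epsilon_y) \leq (1-\epsilon K + o(\epsilon))\,d(x,y)$, and this propagates from edges to an arbitrary pair by summing the triangle inequality for $W_1$ along a shortest directed path from $x$ to $y$. The lower estimate comes from transporting each walk measure back to its base point: writing $W_1(m^\epsilon_x,m^\epsilon_y) \geq d(x,y) - W_1(\delta_x,m^\epsilon_x) - W_1(m^\epsilon_y,\delta_y)$ and recognizing the two correction terms as the forward and backward average jumps, whose combined first-order content is $\epsilon\,\mathcal{H}(x,y)+o(\epsilon)$. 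Cancelling $d(x,y)$, dividing by $\epsilon$, and letting $\epsilon \to 0$ then produces $K\,d(x,y) \leq \mathcal{H}(x,y)$.

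The hard part will be the asymptotic and directional bookkeeping rather than the formal inequality manipulations. Because the Chung Laplacian is built from a non-symmetric weight $\mu$ and a stationary measure, the relevant walk measures are genuinely directed, and the couplings realizing $W_1$ must be assembled compatibly along directed geodesics in both orientations; this is what the word \emph{mixed} in $\mathcal{H}$ encodes, and it is where the symmetry available in the undirected Lin--Lu--Yau setting is lost. I would need to verify that both the contraction and the base-point transport estimates hold to first order uniformly in the idleness parameter, so that the $o(\epsilon)$ remainders are controlled in the limit, and that the linearized correction terms recombine to give precisely $\mathcal{H}(x,y)$ with the normalization under which $\Lambda \geq 2$ plays the role of the Riemannian factor. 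Getting this identification exactly right, rather than up to an undetermined constant, is the crux.
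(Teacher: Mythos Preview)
Your approach is correct and is essentially the paper's proof unpacked: the paper combines Proposition~\ref{prop:local to global} (your upper estimate, obtained by summing the triangle inequality for $W$ along a directed geodesic) with Theorem~\ref{thm:LLY diameter comparison} (your lower estimate, obtained from the triangle inequality $d(x,y)=W(\delta_x,\delta_y)\leq W(\delta_x,\nu^\epsilon_x)+W(\nu^\epsilon_x,\nu^\epsilon_y)+W(\nu^\epsilon_y,\delta_y)$) into the single chain~\eqref{pf of BM}. One reassurance: the ``crux'' you anticipate is not actually there. Because one marginal is a Dirac mass, the only coupling is the product coupling, and hence
\begin{equation*}
W(\delta_x,\nu^\epsilon_x)=\epsilon\sum_{z\in V}\rho_x(z)\,\mathcal{P}(x,z)=-\epsilon\,\mathcal{H}_x,\qquad
W(\nu^\epsilon_y,\delta_y)=\epsilon\sum_{z\in V}\olarrow{\rho}_y(z)\,\mathcal{P}(y,z)=-\epsilon\,\olarrow{\mathcal{H}}_y,
\end{equation*}
so the two correction terms sum to \emph{exactly} $\epsilon\,\mathcal{H}(x,y)$, with no $o(\epsilon)$ remainder and no undetermined constant to chase.
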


We study the equality case in Theorem \ref{thm:BM}.
For $x,y\in V$ with $x\neq y$,
we say that 
$(V,\mu)$ is \textit{spherically suspended with poles $(x,y)$} if the following properties hold:
\begin{enumerate}\setlength{\itemsep}{+0.8mm}
\item $V$ is covered by minimal geodesics from $x$ to $y$, namely,
\begin{equation*}
V=\{z\in V \mid d(x,z)+d(z,y)=d(x,y)\};
\end{equation*}
\item for every minimal geodesic $\{x_{i}\}^{d(x,y)}_{i=0}$ from $x$ to $y$,
it holds that $\kappa(x_{i},x_{j})=K$ for all $i,j \in \{0,\dots,d(x,y)\}$ with $i < j$;
\item $\mathcal{H}(x,y)=\Lambda$.
\end{enumerate}
One of our main theorem is the following structure theorem:
\begin{thm}\label{thm:Cheng}
Let $(V,\mu)$ denote a simple,
strongly connected,
finite weighted directed graph.
We assume $K>0$, and the equality in \eqref{eq:BM} holds, namely,
\begin{equation}\label{eq:equalBM}
\diam V= \frac{\Lambda}{K}.
\end{equation}
Then $(V,\mu)$ is spherically suspended with poles $(x,y)$ for any $x,y\in V$ with $d(x,y)=\diam V$.
\end{thm}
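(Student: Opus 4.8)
The plan is to derive the three conditions of \emph{spherical suspension} from the equality case of the chain of inequalities that underlies Theorem \ref{thm:BM}. The comparison \eqref{eq:BM} is proved by establishing, along a minimal geodesic $\{x_i\}_{i=0}^{d(x,y)}$ from $x$ to $y$, a pointwise estimate of the form
\[
K\,d(x,y) \;\le\; \sum_{i=0}^{d(x,y)-1}\kappa(x_i,x_{i+1}) \;\le\; \mathcal{H}(x,y) \;\le\; \Lambda,
\]
the first inequality being immediate from $\kappa(x_i,x_{i+1})\ge K$ and the last from the definition \eqref{eq:supconst} of $\Lambda$ as a supremum. I would begin by fixing a pair $x,y$ with $d(x,y)=\diam V$ and rewriting the hypothesis \eqref{eq:equalBM} as $K\,d(x,y)=\Lambda$. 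Feeding this into the displayed chain forces every inequality to be saturated. The saturation of the outermost inequality gives $\mathcal{H}(x,y)=\Lambda$, which is exactly condition (3).

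Next I would read off condition (2) from the saturation of the interior inequalities. Since each summand obeys $\kappa(x_i,x_{i+1})\ge K$ and the total equals $K\,d(x,y)$ with $d(x,y)$ summands, every step must satisfy $\kappa(x_i,x_{i+1})=K$. To upgrade this to $\kappa(x_i,x_j)=K$ for all $i<j$, including non-adjacent pairs, I would invoke the behaviour of $\kappa$ along minimal geodesics supplied by the comparison theory: a lower bound expressing $\kappa(x_i,x_j)$ in terms of the (average of the) adjacent curvatures on the segment $x_i,\dots,x_j$. With every adjacent value pinned to $K$, equality in that relation forces $\kappa(x_i,x_j)=K$ throughout, giving condition (2) for this particular geodesic; the same reasoning applies to \emph{every} minimal geodesic from $x$ to $y$, since each is diameter-realizing and triggers the identical saturation.

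The delicate point is condition (1), the geodesic coverage $V=\{z\mid d(x,z)+d(z,y)=d(x,y)\}$, which cannot be seen from a single geodesic computation. Here I would exploit that $\mathcal{H}(x,y)$ is a \emph{mixed asymptotic} mean curvature aggregating the distance functions $d(x,\cdot)$ and $d(\cdot,y)$ over the relevant neighbourhoods, so that the equality $\mathcal{H}(x,y)=\Lambda=K\,d(x,y)$ demands that the transport/averaging defining $\mathcal{H}$ be supported entirely on distance-minimizing moves. I would argue by contradiction: if some $z$ satisfied $d(x,z)+d(z,y)>d(x,y)$, then the term attached to $z$ in the mean-curvature estimate would be strictly sub-extremal, producing a strict inequality $K\,d(x,y)<\mathcal{H}(x,y)$ and contradicting saturation. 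Hence no such $z$ exists, and every vertex lies on a minimal geodesic from $x$ to $y$.

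Finally, because none of the above used a specific diameter-realizing pair beyond the identity $d(x,y)=\diam V$, the conclusion holds for every such $(x,y)$, as claimed. The main obstacle I anticipate is precisely the coverage step: one must identify which term in the definition of $\mathcal{H}(x,y)$ detects an off-geodesic vertex $z$, and verify that this term is strictly sub-extremal unless $z$ lies on a minimal geodesic. Everything else is a routine propagation of equalities through the estimate that proves \eqref{eq:BM}.
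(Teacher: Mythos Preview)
Your treatment of conditions (2) and (3) is essentially correct and matches the paper: the chain
\[
K\,d(x,y)\;\le\;\kappa(x,y)\,d(x,y)\;\le\;\mathcal{H}(x,y)\;\le\;\Lambda
\]
saturates at a diameter-realizing pair (this is the content of Remark~\ref{rem:pole}), and the upgrade from $\kappa(x_i,x_{i+1})=K$ to $\kappa(x_i,x_j)=K$ for all $i<j$ follows from the chain inequality for $\kappa$ along geodesics, exactly as in Lemma~\ref{lem:subconstant}.

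Your argument for condition (1), however, has a genuine gap. The quantity $\mathcal{H}(x,y)=-\bigl(\mathcal{H}_x+\olarrow{\mathcal{H}}_y\bigr)$ is \emph{purely local at the poles}: by definition $\mathcal{H}_x=\mathcal{L}\rho_x(x)=-\sum_{z\in\mathcal{N}_x}\mathcal{P}(x,z)\,d(x,z)$ involves only $\mathcal{N}_x$, and likewise $\olarrow{\mathcal{H}}_y$ involves only $\mathcal{N}_y$. There is simply no ``term attached to $z$'' in $\mathcal{H}(x,y)$ for a vertex $z\notin\mathcal{N}_x\cup\mathcal{N}_y$, so the equality $\mathcal{H}(x,y)=\Lambda$ cannot by itself detect whether such a $z$ lies on a minimal geodesic from $x$ to $y$. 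Even for $z\in\mathcal{N}_x$, the quantity $\mathcal{H}_x$ sees $d(x,z)$ but not $d(z,y)$, so saturation of $\mathcal{H}(x,y)\le\Lambda$ still says nothing about $d(x,z)+d(z,y)$. The contradiction you sketch therefore does not materialize.

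The paper closes this gap by a different mechanism, modelled on the Riemannian proof of Cheng's theorem. One applies the Laplacian comparison (Theorem~\ref{thm:Laplacian comparison}) at \emph{every} vertex, obtaining
\[
\mathcal{L}(\rho_x+\olarrow{\rho}_y)\;\ge\;K(\rho_x+\olarrow{\rho}_y)+(\mathcal{H}_x+\olarrow{\mathcal{H}}_y)\;\ge\;K\,d(x,y)-\mathcal{H}(x,y)\;=\;0,
\]
so that $\rho_x+\olarrow{\rho}_y$ is superharmonic on all of $V$. A minimum principle for the Chung Laplacian (Lemma~\ref{lem:minimum principle}), proved via strong connectivity, then forces $\rho_x+\olarrow{\rho}_y$ to be constant, hence identically $d(x,y)$. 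This is what propagates information from the poles to the whole graph; the missing ingredient in your proposal is precisely this global analytic step.
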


Cushing et al. \cite{CKKLMP} have proved Theorem \ref{thm:Cheng} for unweighted, undirected regular graphs (see \cite[Lemma 5.3 and Theorem 5.5]{CKKLMP}).

\begin{rem}
It seems that the method of the proof in \cite{CKKLMP} works only for regular graphs.
Actually,
it is based on the characterization result of the Lin-Lu-Yau Ricci curvature by the so-called Ollivier Ricci curvature with idleness parameter,
which has been obtained in \cite{BCLMP} (see the proof of \cite[Lemma 5.3]{CKKLMP}, and cf. \cite[Remark 2.3]{CKKLMP}).
We need to reconsider the method of the proof that is suitable for our setting.
To overcome this issue,
we refer to the proof of the Cheng maximal diameter theorem in the smooth setting.
Here we recall that
the Cheng maximal diameter can be proved by combining the Laplacian comparison theorem for the distance functions from poles, which leads us to the superharmonicity of the sum of them, and the minimum principle.
We will prove Theorem \ref{thm:Cheng} by verifying the minimum principle in our setting, and analyzing our Laplacian comparison (see Lemmas \ref{lem:minimum principle} and \ref{lem:harmonic} below).
\end{rem}

\begin{rem}
Matsumoto \cite{M} has stated Theorem \ref{thm:Cheng} for unweighted, undirected (not necessarily regular) graphs in his master thesis in 2010.
His method of the proof is based on primitive mass transport techniques without analytic argument,
which is completely different from our method.
\end{rem}

%%%%%%%%%%%%%%%%%%%%%%%%
\subsection{Organization}\label{sec:Organization}
In Section \ref{sec:Preliminaries},
we will review basics of directed graphs.
In Section \ref{sec:Curvatures},
we recall the formulation of the Ricci curvature introduced in \cite{OSY},
and examine its properties.
In Section \ref{sec:Proof},
we prove Theorem \ref{thm:Cheng}.
We also show that the equalities in other comparison geometric results hold under the same setting as in Theorem \ref{thm:Cheng} (see Theorems \ref{thm:rigidLapcomp} and \ref{thm:rigideigcomp}).
In Section \ref{sec:Examples},
we will present some examples having maximal diameter.

%%%%%%%%%%%%%%%%%%%%%%%%%%%%
%%%%%%%%%%%%%%%%%%%%%%%%%%%%
%%%%%%%%%%%%%%%%%%%%%%%%%%%%
\section{Preliminaries}\label{sec:Preliminaries}
We here review basics and fundamental facts on directed graphs.
We refer to \cite{OSY}.

%%%%%%%%%%%%%%%%%%%%%%%%
\subsection{Directed graphs}\label{sec:Directed graphs}
Let $(G,\mu)$ be a finite weighted directed graph,
namely,
$G=(V,E)$ is a finite directed graph,
and $\mu:V \times V\to [0,\infty)$ is a function such that $\mu(x,y)>0$ if and only if $x\rightarrow y$,
where $x\rightarrow y$ means $(x,y) \in E$ as stated in the above section.
We will denote by $n$ the cardinality of $V$.
The function $\mu$ is called the \textit{edge weight},
and we write $\mu(x,y)$ by $\mu_{xy}$.
Note that
$(G,\mu)$ is undirected if and only if $\mu_{xy}=\mu_{yx}$ for all $x,y\in V$,
and simple if and only if $\mu_{xx}=0$ for all $x\in V$.
Also,
it is called \textit{unweighted} if $\mu_{xy}=1$ whenever $x\rightarrow y$.
The weighted directed graph can be written as $(V,\mu)$ since the full information of $E$ is included in $\mu$.
We use $(V,\mu)$ instead of $(G,\mu)$ as in \cite{OSY}. 

For $x \in V$, 
its \textit{outer neighborhood $N_{x}$}, \textit{inner one $\olarrow{N}_{x}$}, and \textit{neighborhood $\mathcal{N}_{x}$} are defined as
\begin{equation*}\label{eq:neighborhoods}
N_{x}:= \left\{y \in V \mid x\rightarrow y \right\},\quad \olarrow{N}_{x}:= \left\{y \in V \mid y \rightarrow x \right\},\quad \mathcal{N}_{x}:=N_{x} \cup \olarrow{N}_{x},
\end{equation*}
respectively.
The \textit{outer degree $\orarrow{\deg}(x)$} and \textit{inner one $\olarrow{\deg}(x)$} are defined as the cardinality of $N_{x}$ and $\olarrow{N}_{x}$,
respectively.
In the unweighted case,
$(V,\mu)$ is called \textit{Eulerian} if $\orarrow{\deg}(x)=\olarrow{\deg}(x)$ for all $x\in V$.

For $x,y\in V$,
a sequence $\left\{x_{i} \right\}_{i=0}^{l}$ of vertices is said to be \textit{directed path} from $x$ to $y$ if $x_0=x,\,x_{l}=y$ and $x_{i}\rarrow x_{i+1}$ for all $i=0,\dots,l-1$,
where $l$ is called its length.
Further,
$(V,\mu)$ is called \textit{strongly connected} if
for all $x,y\in V$,
there exists a directed path from $x$ to $y$.
For strongly connected $(V,\mu)$,
the (non-symmetric) distance function $d:V\times V\to [0,\infty)$ is defined as follows:
$d(x,y)$ is defined to be the minimum of the length of directed paths from $x$ to $y$.
A directed path $\left\{x_{i} \right\}_{i=0}^{l}$ from $x$ to $y$ is called \textit{minimal geodesic} when $l=d(x,y)$.
The \textit{diameter} of $(V,\mu)$ is defined as
\begin{equation*}\label{eq:diam}
\diam V:=\sup_{x,y\in V}d(x,y).
\end{equation*}
For $x\in V$,
the \textit{distance function $\rho_{x}:V\to \mathbb{R}$},
and the \textit{reverse one $\olarrow{\rho}_{x}:V\to \mathbb{R}$} are done as
\begin{equation*}\label{eq:distance function from a single point}
\rho_{x}(y):=d(x,y),\quad \olarrow{\rho}_{x}(y):=d(y,x).
\end{equation*}
For $L>0$,
a function $f:V\to \mathbb{R}$ is said to be \textit{$L$-Lipschitz} if
\begin{equation*}
f(y)- f(x) \leq L\, d(x,y)
\end{equation*}
for all $x,y\in V$.
Note that
$\rho_{x}$ is $1$-Lipschitz,
but $\olarrow{\rho}_{x}$ is not always $1$-Lipschitz.
Let $\Lip_{L}(V)$ be the set of all $L$-Lipschitz functions on $V$.

%%%%%%%%%%%%%%%%%%%%%%%%
\subsection{Laplacian}\label{sec:Laplacians}
In what follows,
let $(V,\mu)$ be a simple,
strongly connected,
finite weighted directed graph.
In this subsection,
we recall the formulation of the Chung Laplacian introduced in \cite{C1}, \cite{C2}.
The \textit{transition probability kernel} $P:V\times V\to [0,1]$ is defined as
\begin{equation*}\label{eq:Markov kernel}
P(x,y):=\frac{\mu_{xy}}{\mu(x)},
\end{equation*}
where
\begin{equation*}
\mu(x):=\sum_{y\in V}\mu_{xy}.
\end{equation*}
Since $(V,\mu)$ is finite and strongly connected,
the Perron-Frobenius theorem guarantees that
there exists a unique (up to scaling) positive function $m:V\to (0,\infty)$ such that
\begin{equation}\label{eq:Perron Frobenius}
m(x)=\sum_{y\in V}m(y)P(y,x).
\end{equation}
A probability measure $\mathfrak{m}:V\to (0,1]$ satisfying (\ref{eq:Perron Frobenius}) is called the \textit{Perron measure}.

Let $\mathfrak{m}$ stand for the Perron measure.
We define the \textit{reverse transition probability kernel} $\olarrow{P}:V\times V\to [0,1]$,
and the \textit{mean transition probability kernel} $\mathcal{P}:V\times V\to [0,1]$ by
\begin{equation*}\label{eq:perron vector}
\olarrow{P}(x,y):=\frac{\mathfrak{m}(y)}{\mathfrak{m}(x)}P(y,x),\quad \mathcal{P}:=\frac{1}{2}(P+\olarrow{P}).
\end{equation*}

\begin{rem}\label{rem:positive}
We see that $\mathcal{P}(x,y)>0$ if and only if $y\in \mathcal{N}_x$.
\end{rem}

\begin{rem}\label{rem:Eulerian}
When $(V,\mu)$ is Eulerian,
we possess the following expression (see \cite[Examples 1, 2, 3]{C1} and \cite[Remarks 2.2 and 2.3]{OSY})
\begin{equation}\label{eq:Euler}
\mathcal{P}(x,y)= \begin{cases}
		\cfrac{1}{\orarrow{\deg}(x)} & \text{if $x\rightarrow y$ and $y\rightarrow x$},\\
		\cfrac{1}{2\,\orarrow{\deg}(x)} & \text{if either $x\rightarrow y$ or $y\rightarrow x$},\\
		0 & \text{otherwise}.
		\end{cases}
\end{equation}
\end{rem}

Let $\mathcal{F}$ be the set of all functions on $V$.
The \textit{Chung Laplacian} $\mathcal{L}:\mathcal{F}\to \mathcal{F}$ is formulated by
\begin{equation*}\label{eq:Chung Laplacian}
\mathcal{L}f(x):=f(x)-\sum_{y\in V} \mathcal{P}(x,y)f(y),
\end{equation*}
which is symmetric with respect to $\mathfrak{m}$.

%%%%%%%%%%%%%%%%%%%%%%%%
\subsection{Optimal transport theory}\label{sec:OT}

We review the basics of the optimal transport theory (cf. \cite{V2}).
For two probability measures $\nu_{0},\nu_{1}$ on $V$,
a probability measure $\pi :V\times V\to [0,\infty)$ is called a \textit{coupling of $(\nu_{0},\nu_{1})$} if
\begin{equation*}
\sum_{y \in V}\pi(x,y) =\nu_{0}(x),\quad \sum_{x \in V}\pi(x,y) = \nu_{1}(y).
\end{equation*}
Let $\Pi(\nu_{0},\nu_{1})$ be the set of all couplings of $(\nu_{0},\nu_{1})$.
The \textit{Wasserstein distance} from $\nu_{0}$ to $\nu_{1}$ is defined as
\begin{equation}\label{eq:Wasserstein distance}
W(\nu_{0},\nu_{1}):=\inf_{\pi \in \Pi(\nu_{0},\nu_{1})} \sum_{x,y \in V}d(x,y)\pi(x,y),
\end{equation}
which is a (non-symmetric) distance function on the set of all probability measures on $V$.

The following Kantorovich-Rubinstein duality formula is well-known (cf. \cite[Theorem 5.10 and Particular Cases 5.4 and 5.16]{V2}):
\begin{prop}\label{prop:Kantorovich duality}
For any two probability measures $\nu_{0}, \nu_{1}$ on $V$, we have
\begin{equation*}
W(\nu_{0},\nu_{1}) = \sup_{f\in \Lip_{1}(V)} \sum_{x\in V} f(x)\left(  \nu_{1}(x)-\nu_{0}(x)  \right).
\end{equation*}
\end{prop}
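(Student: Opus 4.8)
The plan is to exploit that, because $V$ is finite, the quantity $W(\nu_{0},\nu_{1})$ is the value of a finite-dimensional linear program in the nonnegative variables $\pi(x,y)$, and to obtain the duality from strong linear programming duality together with a $c$-transform step that collapses the two dual potentials into a single $1$-Lipschitz function. The one genuinely delicate point, discussed at the end, is that $d$ is non-symmetric.

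First I would dispose of the inequality $\sup_{f}\le W$ by a direct computation. For any coupling $\pi\in\Pi(\nu_{0},\nu_{1})$ and any $f\in\Lip_{1}(V)$, inserting $\nu_{0}(x)=\sum_{y}\pi(x,y)$ and $\nu_{1}(y)=\sum_{x}\pi(x,y)$ gives
\[
\sum_{x\in V} f(x)\bigl(\nu_{1}(x)-\nu_{0}(x)\bigr)=\sum_{x,y\in V}\bigl(f(y)-f(x)\bigr)\pi(x,y)\le \sum_{x,y\in V} d(x,y)\,\pi(x,y),
\]
the last inequality being the Lipschitz bound $f(y)-f(x)\le d(x,y)$. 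Taking the infimum over $\pi$ and then the supremum over $f$ yields $\sup_{f\in\Lip_{1}(V)}\sum_{x}f(x)(\nu_{1}(x)-\nu_{0}(x))\le W(\nu_{0},\nu_{1})$.

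For the reverse inequality I would invoke strong LP duality. The primal program minimizes $\sum_{x,y}d(x,y)\pi(x,y)$ over $\pi\ge 0$ subject to the two marginal constraints; it is feasible (take $\pi=\nu_{0}\otimes\nu_{1}$) and its value is bounded (by $\diam V$), so the optimum is attained and equals the value of the dual program
\[
\max_{\phi,\psi}\Bigl\{\textstyle\sum_{x}\phi(x)\nu_{0}(x)+\sum_{y}\psi(y)\nu_{1}(y)\ \Big|\ \phi(x)+\psi(y)\le d(x,y)\ \ \forall x,y\Bigr\},
\]
where $\phi,\psi$ are the multipliers attached to the two marginal constraints. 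Thus $W(\nu_{0},\nu_{1})$ equals this maximum.

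Finally I would reduce an optimal pair $(\phi,\psi)$ to a single Lipschitz potential. Replace $\psi$ by its transform $\tilde\psi(y):=\min_{x}\bigl(d(x,y)-\phi(x)\bigr)$: feasibility of $(\phi,\tilde\psi)$ is immediate, and since $\tilde\psi\ge\psi$ and $\nu_{1}\ge 0$ the value does not decrease, while the triangle inequality for $d$ gives $\tilde\psi(y)-\tilde\psi(y')\le d(y',y)$, i.e. $\tilde\psi\in\Lip_{1}(V)$. Set $f:=\tilde\psi$. Using $d(x,x)=0$, feasibility on the diagonal forces $\phi(x)\le-\tilde\psi(x)=-f(x)$, so replacing $\phi$ by $-f$ again does not decrease the value (as $\nu_{0}\ge 0$) and preserves feasibility (since $\tilde\psi\in\Lip_{1}(V)$); the resulting objective is exactly $\sum_{x}f(x)(\nu_{1}(x)-\nu_{0}(x))$ with $f\in\Lip_{1}(V)$. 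Hence $W(\nu_{0},\nu_{1})\le\sup_{f}$, and combined with the easy direction this proves the claim. The main obstacle, and the only place requiring real care, is precisely the non-symmetry of $d$: I must track the orientation of the Lipschitz inequality throughout the $c$-transform and check that $\tilde\psi$ is Lipschitz in the correct direction, with the identity $d(x,x)=0$ being exactly what licenses passing from $\phi$ to the single potential $-f$.
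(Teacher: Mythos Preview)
Your argument is correct: the easy direction is a direct computation, strong LP duality on the finite primal gives an optimal dual pair $(\phi,\psi)$, and the $c$-transform/diagonal step collapses it to a single $1$-Lipschitz potential; you also tracked the orientation of the Lipschitz inequality under the non-symmetric $d$ correctly, using $d(x,y)\le d(x,y')+d(y',y)$ to get $\tilde\psi(y)-\tilde\psi(y')\le d(y',y)$ and $d(x,x)=0$ to pass to $\phi\le -f$.

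As for comparison, the paper does not give its own proof of this proposition at all: it simply records it as the well-known Kantorovich--Rubinstein duality and refers to \cite[Theorem~5.10 and Particular Cases~5.4 and~5.16]{V2}. Your write-up is therefore a self-contained finite-dimensional substitute for that citation, with the added value that it makes explicit why the formula survives the non-symmetry of $d$ used in this paper.
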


%%%%%%%%%%%%%%%%%%%%%%%%
%%%%%%%%%%%%%%%%%%%%%%%%
%%%%%%%%%%%%%%%%%%%%%%%%
\section{Curvatures}\label{sec:Curvatures}

In this section,
we investigate some basic properties of curvatures on directed graphs.

%%%%%%%%%%%%%%%%%%%%%%%%
\subsection{Ricci curvature}\label{sec:Ricci curvature}
For $\epsilon \in [0,1]$,
and for $x,y\in V$ with $x\neq y$,
we set
\begin{equation*}\label{eq:pre Ricci curvature}
\kappa_{\epsilon}(x,y):=1-\frac{W(\nu^{\epsilon}_{x},\nu^{\epsilon}_{y})}{d(x,y)},
\end{equation*}
where $\nu^{\epsilon}_{x}:V\to [0,1]$ is a probability measure defined by
\begin{equation*}
\nu^{\epsilon}_{x}(z)=(1-\epsilon)\delta_{x}(z)+\epsilon \,\mathcal{P}(x,z)
\end{equation*}
for the Dirac measure $\delta_x$.
The authors \cite{OSY} have introduced the \textit{Ricci curvature} as follows (see \cite[Definition 3.6]{OSY}):
\begin{equation*}
\kappa(x,y):=\lim_{\epsilon\to 0}\frac{\kappa_{\epsilon}(x,y)}{\epsilon},
\end{equation*}
which is well-defined (see \cite[Lemmas 3.2 and 3.4, and Definition 3.6]{OSY}).
In the undirected case,
this is nothing but the Lin-Lu-Yau Ricci curvature in \cite{LLY}.

We recall a characterization of the Ricci curvature in terms of the Chung Laplacian $\mathcal{L}$.
Let $x,y\in V$ with $x\neq y$.
The \textit{gradient operator} is defined by
\begin{equation*}\label{eq:gradient operator}
\nabla_{xy} f:=\frac{f(y)-f(x)}{d(x,y)}
\end{equation*}
for $f:V\to \mathbb{R}$.
We set
\begin{equation*}
\mathcal{F}_{xy}:=\{f\in \Lip_{1}(V)  \mid \nabla_{xy}f=1\}.
\end{equation*}
We possess the following characterization,
which has been obtained by M\"unch-Wojciechowski \cite{MW} in the undirected case (see \cite[Theorem 3.10]{OSY}, and also \cite[Theorem 2.1]{MW}):
\begin{thm}[\cite{MW}, \cite{OSY}]\label{thm:limit free formula}
\begin{equation*}
\kappa(x,y)=\inf_{f\in \mathcal{F}_{xy}}  \nabla_{xy} \mathcal{L} f.
\end{equation*}
\end{thm}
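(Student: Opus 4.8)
The plan is to linearize the optimal-transport problem defining $\kappa_{\epsilon}(x,y)$ via the Kantorovich--Rubinstein duality (Proposition \ref{prop:Kantorovich duality}), turning $\kappa_{\epsilon}(x,y)$ into an infimum over $\Lip_{1}(V)$ of functions that are affine in $\epsilon$, and then to let $\epsilon \to 0$. First I would expand the signed measure $\nu^{\epsilon}_{y}-\nu^{\epsilon}_{x}$ and pair it with a test function $f \in \Lip_{1}(V)$. Using the elementary identity $\sum_{z \in V}\mathcal{P}(x,z)f(z) = f(x)-\mathcal{L}f(x)$, which is just the definition of the Chung Laplacian, the terms reorganize into
\begin{equation*}
\frac{W(\nu^{\epsilon}_{x},\nu^{\epsilon}_{y})}{d(x,y)} = \sup_{f \in \Lip_{1}(V)}\left( \nabla_{xy}f - \epsilon\,\nabla_{xy}\mathcal{L}f \right),
\end{equation*}
whence
\begin{equation*}
\kappa_{\epsilon}(x,y) = \inf_{f \in \Lip_{1}(V)}\left( (1-\nabla_{xy}f) + \epsilon\,\nabla_{xy}\mathcal{L}f \right).
\end{equation*}
Since $\mathcal{L}$ annihilates constants, the infimand is invariant under $f \mapsto f+c$, so I may normalize $f(x)=0$ and work over the compact set $\{ f \in \Lip_{1}(V) \mid f(x)=0\}$, on which $\nabla_{xy}f \leq 1$ with equality precisely on $\mathcal{F}_{xy}$. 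Write $\kappa^{*}(x,y):=\inf_{f \in \mathcal{F}_{xy}}\nabla_{xy}\mathcal{L}f$ for the quantity to be identified with $\kappa(x,y)$.

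For the inequality $\kappa(x,y)\leq \kappa^{*}(x,y)$, I would simply test the displayed infimum against an arbitrary $f \in \mathcal{F}_{xy}$: the first summand vanishes, so $\kappa_{\epsilon}(x,y)\leq \epsilon\,\nabla_{xy}\mathcal{L}f$, and dividing by $\epsilon$ and letting $\epsilon\to 0$ gives $\kappa(x,y)\leq \nabla_{xy}\mathcal{L}f$; taking the infimum over $f \in \mathcal{F}_{xy}$ yields the claim. For the reverse inequality I would pick, for each small $\epsilon>0$, a minimizer $f_{\epsilon}$ with $f_{\epsilon}(x)=0$ of the infimum (it exists by compactness and continuity). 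Since $1-\nabla_{xy}f_{\epsilon}\geq 0$, the defining formula gives $\kappa_{\epsilon}(x,y)\geq \epsilon\,\nabla_{xy}\mathcal{L}f_{\epsilon}$, that is $\kappa_{\epsilon}(x,y)/\epsilon \geq \nabla_{xy}\mathcal{L}f_{\epsilon}$.

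The crux is to control the minimizers $f_{\epsilon}$ as $\epsilon \to 0$. Because $\kappa_{\epsilon}(x,y)=\epsilon\cdot(\kappa_{\epsilon}(x,y)/\epsilon)\to 0$ (the ratio tends to the finite limit $\kappa(x,y)$) while $\epsilon\,\nabla_{xy}\mathcal{L}f_{\epsilon}\to 0$ (the $f_{\epsilon}$ stay in a fixed compact set), the nonnegative quantity $1-\nabla_{xy}f_{\epsilon}$ must vanish in the limit; hence any subsequential limit $f_{0}$ of $\{f_{\epsilon}\}$ satisfies $\nabla_{xy}f_{0}=1$, i.e. $f_{0}\in\mathcal{F}_{xy}$. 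Passing to such a subsequence in $\kappa_{\epsilon}(x,y)/\epsilon \geq \nabla_{xy}\mathcal{L}f_{\epsilon}$ and invoking continuity of $\nabla_{xy}\mathcal{L}$ yields $\kappa(x,y)\geq \nabla_{xy}\mathcal{L}f_{0}\geq \kappa^{*}(x,y)$, which closes the argument. This lower bound is the delicate step and the expected main obstacle: the minimizers $f_{\epsilon}$ need not lie in $\mathcal{F}_{xy}$, so $\nabla_{xy}\mathcal{L}f_{\epsilon}$ cannot be bounded below by $\kappa^{*}(x,y)$ directly, and the passage to the limit rests essentially on the finiteness of $V$ and on the sign condition $1-\nabla_{xy}f\geq 0$ on $\Lip_{1}(V)$. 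As a cleaner alternative, one may note that the infimand is linear in $f$ for fixed $\epsilon$, so $\kappa_{\epsilon}(x,y)$ is concave and piecewise affine in $\epsilon$ with a minimizing vertex that stabilizes for small $\epsilon$; this reproves that the limit exists and shows the minimizer $f_{\epsilon}$ may be chosen in $\mathcal{F}_{xy}$ for all sufficiently small $\epsilon>0$.
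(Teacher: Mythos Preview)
The paper does not actually prove Theorem~\ref{thm:limit free formula}; it merely quotes the result from \cite[Theorem~3.10]{OSY} (and \cite[Theorem~2.1]{MW} in the undirected case). So there is no ``paper's own proof'' to compare against here.

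That said, your argument is correct and is essentially the one used in those references. The duality computation giving
\[
\kappa_{\epsilon}(x,y)=\inf_{f\in\Lip_{1}(V)}\bigl((1-\nabla_{xy}f)+\epsilon\,\nabla_{xy}\mathcal{L}f\bigr)
\]
is exactly right (in the directed setting the inequality $f(y)-f(x)\le d(x,y)$ still gives $1-\nabla_{xy}f\ge 0$), the upper bound $\kappa(x,y)\le\kappa^{*}(x,y)$ by testing with $f\in\mathcal{F}_{xy}$ is immediate, and the lower bound via compactness of $\{f\in\Lip_{1}(V)\mid f(x)=0\}$ and a subsequential limit $f_{0}\in\mathcal{F}_{xy}$ is the standard way to close the argument. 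Your ``cleaner alternative'' remark about the concave piecewise-affine structure in $\epsilon$ is precisely how \cite{MW} phrases the well-definedness of $\kappa$ and identifies the limit, so you have in effect rediscovered the published proof.
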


Our Ricci curvature satisfies the following (see \cite[Lemma 2.3]{LLY}, \cite[Proposition 3.8]{OSY}):
\begin{prop}[\cite{LLY}, \cite{OSY}]\label{prop:local to global}
\begin{equation*}
\inf_{x\neq y}\kappa(x,y)\geq K,
\end{equation*}
where $K$ is defined as \eqref{eq:infconst}.
\end{prop}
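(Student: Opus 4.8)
The plan is to deduce the global bound from the edge-wise bound via the limit-free formula in Theorem \ref{thm:limit free formula} together with a telescoping argument along a minimal geodesic. Fix $x,y\in V$ with $x\neq y$, set $m:=d(x,y)$, and let $f\in\mathcal{F}_{xy}$ be arbitrary. Since $\kappa(x,y)=\inf_{f\in\mathcal{F}_{xy}}\nabla_{xy}\mathcal{L}f$ by Theorem \ref{thm:limit free formula}, it suffices to show $\nabla_{xy}\mathcal{L}f\geq K$ for every such $f$ and then pass to the infimum.

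First I would fix a minimal geodesic $x=x_{0}\rarrow x_{1}\rarrow\cdots\rarrow x_{m}=y$, so that $d(x_{i},x_{i+1})=1$ for each $i$ (recall that the distance counts the number of edges, independently of the weights). The key observation is that $f$ is \emph{tight on every edge of this geodesic}: since $f$ is $1$-Lipschitz we have $f(x_{i+1})-f(x_{i})\leq d(x_{i},x_{i+1})=1$, while summing these inequalities and invoking $\nabla_{xy}f=1$, i.e.\ $f(y)-f(x)=m$, forces equality in each one. Hence $\nabla_{x_{i}x_{i+1}}f=1$, and as $f$ remains globally $1$-Lipschitz we conclude $f\in\mathcal{F}_{x_{i}x_{i+1}}$ for every $i$.

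With this in hand, I would telescope the Laplacian along the geodesic:
\begin{equation*}
\nabla_{xy}\mathcal{L}f=\frac{\mathcal{L}f(y)-\mathcal{L}f(x)}{m}=\frac{1}{m}\sum_{i=0}^{m-1}\bigl(\mathcal{L}f(x_{i+1})-\mathcal{L}f(x_{i})\bigr)=\frac{1}{m}\sum_{i=0}^{m-1}\nabla_{x_{i}x_{i+1}}\mathcal{L}f.
\end{equation*}
Applying Theorem \ref{thm:limit free formula} to each edge-pair, membership $f\in\mathcal{F}_{x_{i}x_{i+1}}$ gives $\nabla_{x_{i}x_{i+1}}\mathcal{L}f\geq\kappa(x_{i},x_{i+1})$; and since $x_{i}\rarrow x_{i+1}$ is a directed edge, $\kappa(x_{i},x_{i+1})\geq K$ by the definition \eqref{eq:infconst} of $K$. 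Thus each summand is at least $K$, whence $\nabla_{xy}\mathcal{L}f\geq K$. Taking the infimum over $f\in\mathcal{F}_{xy}$ yields $\kappa(x,y)\geq K$, and as $x\neq y$ was arbitrary the claim follows. (Note that the reverse inequality is immediate, since the edges form a subset of all distinct pairs, so in fact equality holds.)

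I expect the only genuinely substantive point to be the tightness-on-edges step: one must verify that an admissible competitor $f$ for the pair $(x,y)$ automatically restricts to an admissible competitor for each consecutive edge-pair along a minimal geodesic. This is precisely where the interplay between the $1$-Lipschitz condition and the normalization $\nabla_{xy}f=1$ is essential, forcing the Lipschitz bound to be saturated edge by edge. Everything else is a telescoping of $\mathcal{L}f$ combined with the edge-wise lower bound, and the non-symmetry of $d$ causes no difficulty because we only ever traverse the geodesic in its forward direction, along which each edge has length $1$.
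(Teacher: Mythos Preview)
Your argument is correct. The paper, however, takes a different route: rather than invoking Theorem~\ref{thm:limit free formula}, it works directly at the $\epsilon$-level, applying the triangle inequality for the Wasserstein distance $W$ to the measures $\nu^{\epsilon}_{x_i}$ along a minimal geodesic to obtain
\[
\kappa_{\epsilon}(x,y)\,d(x,y)\ \geq\ \sum_{i=0}^{l-1}\kappa_{\epsilon}(x_{i},x_{i+1}),
\]
and then lets $\epsilon\to 0$. This is packaged as a more general ``chain lemma'' (Lemma~\ref{lem:chain}), allowing one to replace any block of consecutive edges by the corresponding pair $(z,w)$; Proposition~\ref{prop:local to global} follows by taking each block to be a single edge. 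Your approach is arguably more direct for the proposition itself, since it bypasses the limiting procedure entirely and uses only the dual characterization. On the other hand, the paper's formulation via Lemma~\ref{lem:chain} is reused later in the rigidity analysis (Lemma~\ref{lem:subconstant}); it is worth noting that your tightness-on-edges argument extends immediately to show $f\in\mathcal{F}_{zw}$ for any subsegment $(z,w)$ of the geodesic, so your method would recover Lemma~\ref{lem:chain} with no extra effort.
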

The authors \cite{OSY} have stated Proposition \ref{prop:local to global} without proof.
For completeness,
we give its proof.
Proposition \ref{prop:local to global} is a direct consequence of the following lemma:
\begin{lem}\label{lem:chain}
Let $x,y\in V$ with $x \neq y$,
and let $\{x_{i}\}^{l}_{i=0}$ be a minimal geodesic from $x$ to $y$ with $l=d(x,y)$.
We set $z:=x_{a}$ and $w:=x_{b}$ for some $a,b \in \{0,\dots,l\}$ with $a < b$.
Then the following hold:
\begin{enumerate}
\item If $a\geq 1$ and $b\leq l-1$,
then
\begin{equation}\label{eq:chain1}
\kappa(x,y) d(x,y)\geq \sum^{a-1}_{i=0}\kappa(x_{i},x_{i+1})+\kappa(z,w)d(z,w)+\sum^{l-1}_{i=b}\kappa(x_{i},x_{i+1});
\end{equation}
\item If $a\geq 1$ and $b=l$,
then
\begin{equation}\label{eq:chain2}
\kappa(x,y) d(x,y)\geq \sum^{a-1}_{i=0}\kappa(x_{i},x_{i+1})+\kappa(z,y)d(z,y);
\end{equation}
\item If $a=0$ and $b\leq l-1$,
then
\begin{equation}\label{eq:chain3}
\kappa(x,y) d(x,y)\geq \kappa(x,w)d(x,w)+\sum^{l-1}_{i=b}\kappa(x_{i},x_{i+1}).
\end{equation}
\end{enumerate}
\end{lem}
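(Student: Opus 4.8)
The plan is to derive all three inequalities from the variational characterization in Theorem \ref{thm:limit free formula}, combined with a single observation: any competitor $f \in \mathcal{F}_{xy}$ is automatically \emph{tight} along every minimal geodesic from $x$ to $y$, and therefore also serves as a competitor for the curvature of each subsegment. First I would rewrite the characterization, multiplying $\nabla_{xy}\mathcal{L}f$ by $d(x,y)$, in the form
\[
\kappa(x,y)\,d(x,y)=\inf_{f\in\mathcal{F}_{xy}}\bigl[(\mathcal{L}f)(y)-(\mathcal{L}f)(x)\bigr].
\]
The key point about the right-hand side is that membership in $\mathcal{F}_{zw}$ gives an infimum inequality: for every $g\in\mathcal{F}_{zw}$ one has $\nabla_{zw}\mathcal{L}g\geq\kappa(z,w)$, equivalently $(\mathcal{L}g)(w)-(\mathcal{L}g)(z)\geq\kappa(z,w)\,d(z,w)$.

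Next I would record the tightness observation. Fix $f\in\mathcal{F}_{xy}$, so that $f\in\Lip_{1}(V)$ and $f(y)-f(x)=d(x,y)=l$. Telescoping along the geodesic and applying the $1$-Lipschitz bound on each edge yields
\[
l=f(y)-f(x)=\sum_{i=0}^{l-1}\bigl(f(x_{i+1})-f(x_i)\bigr)\leq\sum_{i=0}^{l-1}d(x_i,x_{i+1})=l,
\]
so every intermediate inequality is an equality and $f(x_{i+1})-f(x_i)=d(x_i,x_{i+1})=1$ for each $i$. Here I use that a subpath of a minimal geodesic is again minimal, which gives $d(x_i,x_{i+1})=1$ and, more generally, $d(z,w)=b-a$. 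Since $f$ remains globally $1$-Lipschitz, it follows that $f\in\mathcal{F}_{x_ix_{i+1}}$ for each edge, and because $f(w)-f(z)=b-a=d(z,w)$ we also get $f\in\mathcal{F}_{zw}$ (and likewise $f\in\mathcal{F}_{zy}$ or $f\in\mathcal{F}_{xw}$ in the boundary cases).

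Finally I would assemble each inequality by decomposing $(\mathcal{L}f)(y)-(\mathcal{L}f)(x)$ according to the chosen cut points. For case (1) I split
\[
(\mathcal{L}f)(y)-(\mathcal{L}f)(x)=\sum_{i=0}^{a-1}\bigl[(\mathcal{L}f)(x_{i+1})-(\mathcal{L}f)(x_i)\bigr]+\bigl[(\mathcal{L}f)(w)-(\mathcal{L}f)(z)\bigr]+\sum_{i=b}^{l-1}\bigl[(\mathcal{L}f)(x_{i+1})-(\mathcal{L}f)(x_i)\bigr],
\]
bound each edge term below by $\kappa(x_i,x_{i+1})$ and the middle block below by $\kappa(z,w)\,d(z,w)$ via the infimum inequality above, and then take the infimum over $f\in\mathcal{F}_{xy}$ on the left to obtain \eqref{eq:chain1}. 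Cases (2) and (3) follow by the same telescoping with the middle block merged into the terminal segment, producing \eqref{eq:chain2} and \eqref{eq:chain3} respectively.

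The only substantive step is the tightness observation; everything else is bookkeeping with a telescoping sum. Accordingly, I expect the main obstacle to be making precise that each restricted piece of $f$ genuinely qualifies as a competitor for the relevant curvature --- that is, that $f$ stays globally $1$-Lipschitz while attaining gradient $1$ on each subsegment --- together with the additivity of distances along the geodesic, which rests on the minimality of subpaths of a minimal geodesic.
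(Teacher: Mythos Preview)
Your proof is correct, but it takes a genuinely different route from the paper's. The paper argues at the level of the $\epsilon$-curvature $\kappa_\epsilon$: it applies the triangle inequality for the Wasserstein distance $W$ along the decomposition $x=x_0,\dots,x_a=z,\,w=x_b,\dots,x_l=y$, rewrites everything in terms of $\kappa_\epsilon$ using $d(x_i,x_{i+1})=1$ and $d(z,w)=b-a$, then divides by $\epsilon$ and lets $\epsilon\to 0$. You instead invoke the limit-free characterization of Theorem~\ref{thm:limit free formula} and use the tightness observation that any $f\in\mathcal{F}_{xy}$ automatically lies in $\mathcal{F}_{x_ix_{i+1}}$ and $\mathcal{F}_{zw}$, so each telescoped increment of $\mathcal{L}f$ is bounded below by the corresponding curvature term. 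The two arguments are dual to each other through Kantorovich duality (Proposition~\ref{prop:Kantorovich duality}): the paper's triangle inequality for $W$ and your tightness-of-competitors step encode the same concavity. Your version is a bit cleaner since it avoids the $\epsilon\to 0$ limit, at the cost of relying on Theorem~\ref{thm:limit free formula}; the paper's version is more self-contained from the definition of $\kappa$.
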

\begin{proof}
We only show the inequality \eqref{eq:chain1}.
The others \eqref{eq:chain2}, \eqref{eq:chain3} can be proved by the same argument,
and more easily.
By the triangle inequality of $W$,
\begin{equation*}
W(\nu^{\epsilon}_{x},\nu^{\epsilon}_{y})\leq \sum^{a-1}_{i=0}W(\nu^{\epsilon}_{x_{i}},\nu^{\epsilon}_{x_{i+1}})
+W(\nu^{\epsilon}_{z},\nu^{\epsilon}_{w})
+\sum^{l-1}_{i=b}W(\nu^{\epsilon}_{x_{i}},\nu^{\epsilon}_{x_{i+1}}).
\end{equation*}
Since $d(x,y)=l$ and $d(z,w)=b-a$, we see
\begin{align*}
&\quad\,\,\kappa_{\epsilon}(x,y)\\
&\geq 1-\frac{1}{l}\left\{  \sum^{a-1}_{i=0}W(\nu^{\epsilon}_{x_{i}},\nu^{\epsilon}_{x_{i+1}})
+W(\nu^{\epsilon}_{z},\nu^{\epsilon}_{w})
+\sum^{l-1}_{i=b}W(\nu^{\epsilon}_{x_{i}},\nu^{\epsilon}_{x_{i+1}}) \right\}\\
&\geq 1+\frac{1}{l}\left\{-a + \sum^{a-1}_{i=0}(1-W(\nu^{\epsilon}_{x_{i}},\nu^{\epsilon}_{x_{i+1}}))
 \right\}
+ \frac{1}{l}\left\{ -d(z,w)+ d(z,w) \left(  1-\frac{W(\nu^{\epsilon}_{z},\nu^{\epsilon}_{w})}{d(z,w)}   \right)  \right\}\\
&\qquad \qquad \qquad \qquad \qquad \qquad \qquad \qquad \quad \,\,\,\,\, +\frac{1}{l}\left\{-(l-b) + \sum^{l-1}_{i=b}(1-W(\nu^{\epsilon}_{x_{i}},\nu^{\epsilon}_{x_{i+1}}))\right\}\\
&= \frac{1}{l}\left\{\sum^{a-1}_{i=0}\kappa_{\epsilon}(x_i,x_{i+1})+(b-a) \kappa_{\epsilon}(z,w)+\sum^{l-1}_{i=b}\kappa_{\epsilon}(x_i,x_{i+1}) \right\}.
\end{align*}
By dividing the both sides by $\epsilon$,
and by letting $\epsilon \to 0$,
we complete the proof.
\end{proof}

\begin{proof}[Proof of Proposition \ref{prop:local to global}]
Proposition \ref{prop:local to global} follows from choosing $b=a+1$ in Lemma \ref{lem:chain}.
\end{proof}

%%%%%%%%%%%%%%%%%%%%%%%%
\subsection{Asymptotic mean curvature}\label{sec:Mean curvature}
In the present subsection,
we recall the notion of the asymptotic mean curvature introduced by the authors \cite{OSY}.
For $x\in V$,
the \textit{asymptotic mean curvature $\mathcal{H}_{x}$ around $x$},
and the \textit{reverse one $\olarrow{\mathcal{H}}_{x}$} are defined as
\begin{equation*}\label{eq:asymptotic mean curvature}
\mathcal{H}_{x}:=\mathcal{L} \rho_{x}(x),\quad \olarrow{\mathcal{H}}_{x}:=\mathcal{L} \olarrow{\rho}_{x}(x).
\end{equation*}
It holds that $\mathcal{H}_{x}\leq -1$ and $\olarrow{\mathcal{H}}_{x}\leq -1$ in general;
moreover, the equalities hold in the undirected case.
In particular,
they play an essential role in the directed case.
For $x,y\in V$,
the \textit{mixed asymptotic mean curvature} $\mathcal{H}(x,y)$ is defined by
\begin{equation*}\label{eq:mixed asymptotic mean curvature}
\mathcal{H}(x,y):=-(\mathcal{H}_{x}+\olarrow{\mathcal{H}}_{y}).
\end{equation*}
We have $\mathcal{H}(x,y)\geq 2$;
moreover,
the equality holds in the undirected case.

%%%%%%%%%%%%%%%%%%%%%%%%%%%%
\subsection{Products}\label{sec:Products}
We consider the weighted Cartesian product of $(V,\mu)$,
and another simple, strongly connected, finite weighted directed graph $(V',\mu')$.
For two parameters $\alpha, \beta>0$,
the \textit{$(\alpha,\beta)$-weighted Cartesian product $(V, \mu) \square_{(\alpha,\beta)} (V', \mu')$ of $(V,\mu)$ and $(V',\mu')$} is defined as follows:
Its vertex set is $V\times V'$,
and its edge weight is
\begin{equation*}\label{eq:product weight}
\mu_{(\alpha,\beta)}(\mathbf{x},\mathbf{y}):=\beta\mu'(x')\mu_{xy}\,\delta_{x'}(y')+\alpha\mu(x)\mu'_{x'y'}\,\delta_x(y)
\end{equation*}
for $\mathbf{x}=(x,x'), \mathbf{y}=(y,y') \in V\times V'$,
where $\mu'(x')$ denotes the vertex weight at $x'$ on $(V',\mu')$.
The distance function $d_{(\alpha,\beta)}:(V\times V')\times (V\times V')\to [0,\infty)$ should be
\begin{equation}\label{eq:product distance}
d_{(\alpha,\beta)}(\mathbf{x},\mathbf{y})=d(x,y)+d'(x',y')
\end{equation}
for the distance functions $d$ and $d'$ on $(V,\mu)$ and $(V',\mu')$,
respectively.

Let $\mathbf{x}=(x,x'), \mathbf{y}=(y,y') \in V\times V'$.
Let $\mathcal{H}'_{x'},\olarrow{\mathcal{H}}'_{x'},\mathcal{H}'(x',y')$ denote
the asymptotic mean curvature around $x'$,
the reverse one,
the mixed asymptotic mean curvature over $(V',\mu')$,
respectively.
Also,
let $\mathcal{H}_{(\alpha,\beta),\mathbf{x}},\olarrow{\mathcal{H}}_{(\alpha,\beta),\mathbf{x}},\mathcal{H}_{(\alpha,\beta)}(\mathbf{x},\mathbf{y})$ be
the asymptotic mean curvature around $\mathbf{x}$,
the reverse one,
the mixed asymptotic mean curvature over $(V, \mu) \square_{(\alpha,\beta)} (V', \mu')$,
respectively.
We summarize formulas for asymptotic mean curvature (see \cite[Proposition 5.5]{OSY}):
\begin{prop}[\cite{OSY}]\label{prop:product mean curv}
\begin{align}\notag
\mathcal{H}_{(\alpha,\beta),\mathbf{x}}&=\frac{\beta}{\alpha+\beta}\mathcal{H}_{x}+\frac{\alpha}{\alpha+\beta}\mathcal{H}'_{x'},\\ \notag
\olarrow{\mathcal{H}}_{(\alpha,\beta),\mathbf{x}}&=\frac{\beta}{\alpha+\beta}\olarrow{\mathcal{H}}_{x}+\frac{\alpha}{\alpha+\beta}\olarrow{\mathcal{H}}'_{x'},\\ \label{eq:product mean curv}
\mathcal{H}_{(\alpha,\beta)}(\mathbf{x},\mathbf{y})&=\frac{\beta}{\alpha+\beta}\mathcal{H}(x,y)+\frac{\alpha}{\alpha+\beta}\mathcal{H}'(x',y').
\end{align}
\end{prop}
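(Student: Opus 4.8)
The plan is to compute the mean transition probability kernel $\mathcal{P}_{(\alpha,\beta)}$ of the product explicitly in terms of those of the two factors, and then evaluate the Chung Laplacian on the relevant distance functions directly. First I would compute the transition probability kernel. A direct summation gives the vertex weight $\mu_{(\alpha,\beta)}(\mathbf{x})=(\alpha+\beta)\mu(x)\mu'(x')$, since the edges emanating from $\mathbf{x}=(x,x')$ are exactly the horizontal ones (to $(y,x')$ with $x\rarrow y$) and the vertical ones (to $(x,y')$ with $x'\rarrow y'$). Dividing the edge weight by this vertex weight yields
\begin{equation*}
P_{(\alpha,\beta)}(\mathbf{x},(y,x'))=\frac{\beta}{\alpha+\beta}P(x,y),\qquad P_{(\alpha,\beta)}(\mathbf{x},(x,y'))=\frac{\alpha}{\alpha+\beta}P'(x',y'),
\end{equation*}
with $P_{(\alpha,\beta)}$ vanishing on all other pairs.

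Next I identify the Perron measure of the product as the (normalized) product $\mathfrak{m}_{(\alpha,\beta)}(\mathbf{x})=\mathfrak{m}(x)\mathfrak{m}'(x')$. To see this I verify the Perron--Frobenius equation \eqref{eq:Perron Frobenius}: splitting the sum $\sum_{\mathbf{y}}\mathfrak{m}_{(\alpha,\beta)}(\mathbf{y})P_{(\alpha,\beta)}(\mathbf{y},\mathbf{x})$ into its horizontal and vertical contributions and applying \eqref{eq:Perron Frobenius} to each factor separately, the two weights $\beta/(\alpha+\beta)$ and $\alpha/(\alpha+\beta)$ recombine to give $\mathfrak{m}(x)\mathfrak{m}'(x')$. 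By the uniqueness part of the Perron--Frobenius theorem, this product is the Perron measure. With it in hand, the reverse kernel $\olarrow{P}_{(\alpha,\beta)}$ inherits the same splitting, because the product structure of $\mathfrak{m}_{(\alpha,\beta)}$ makes the ratio $\mathfrak{m}_{(\alpha,\beta)}(\mathbf{y})/\mathfrak{m}_{(\alpha,\beta)}(\mathbf{x})$ collapse to the corresponding one-factor ratio, giving $\olarrow{P}_{(\alpha,\beta)}(\mathbf{x},(y,x'))=\frac{\beta}{\alpha+\beta}\olarrow{P}(x,y)$ and $\olarrow{P}_{(\alpha,\beta)}(\mathbf{x},(x,y'))=\frac{\alpha}{\alpha+\beta}\olarrow{P}'(x',y')$. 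Averaging with $P_{(\alpha,\beta)}$ produces the analogous horizontal/vertical splitting of the mean kernel $\mathcal{P}_{(\alpha,\beta)}$.

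Finally I evaluate the Laplacian on $\rho_{\mathbf{x}}$ at $\mathbf{x}$. Using the product distance \eqref{eq:product distance}, a horizontal neighbor satisfies $\rho_{\mathbf{x}}(y,x')=d(x,y)=\rho_x(y)$ and a vertical neighbor satisfies $\rho_{\mathbf{x}}(x,y')=d'(x',y')=\rho'_{x'}(y')$, while $\rho_{\mathbf{x}}(\mathbf{x})=0$. Since $\mathcal{P}_{(\alpha,\beta)}(\mathbf{x},\cdot)$ is supported on these neighbors, substituting into the definition of $\mathcal{L}_{(\alpha,\beta)}$ and recognizing $\mathcal{H}_x=-\sum_y \mathcal{P}(x,y)\rho_x(y)$ and $\mathcal{H}'_{x'}=-\sum_{y'}\mathcal{P}'(x',y')\rho'_{x'}(y')$ gives the first formula. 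The reverse curvature follows by the identical computation with $\olarrow{\rho}$ in place of $\rho$, and the mixed formula is then immediate from the definition $\mathcal{H}_{(\alpha,\beta)}(\mathbf{x},\mathbf{y})=-(\mathcal{H}_{(\alpha,\beta),\mathbf{x}}+\olarrow{\mathcal{H}}_{(\alpha,\beta),\mathbf{y}})$.

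I expect the identification of the Perron measure as the product measure to be the main step, since every subsequent splitting---of the reverse kernel, the mean kernel, and hence the Laplacian---rests on it; once the horizontal/vertical decomposition of $\mathcal{P}_{(\alpha,\beta)}$ is established, the remaining computations are routine bookkeeping.
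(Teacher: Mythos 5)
Your argument is correct and complete: the computation of the product vertex weight $(\alpha+\beta)\mu(x)\mu'(x')$, the verification that $\mathfrak{m}(x)\mathfrak{m}'(x')$ satisfies the Perron--Frobenius equation (hence is the Perron measure by uniqueness), the resulting horizontal/vertical splitting of $P_{(\alpha,\beta)}$, $\olarrow{P}_{(\alpha,\beta)}$ and $\mathcal{P}_{(\alpha,\beta)}$ with weights $\beta/(\alpha+\beta)$ and $\alpha/(\alpha+\beta)$, and the evaluation of $\mathcal{L}_{(\alpha,\beta)}$ on $\rho_{\mathbf{x}}$ and $\olarrow{\rho}_{\mathbf{x}}$ via the product distance all check out. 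The paper itself gives no proof here (it cites Proposition 5.5 of \cite{OSY}), and your computation is exactly the standard argument used there, so nothing further is needed.
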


For $x',y' \in V'$ with $x'\neq y'$,
let $\kappa'(x',y')$ denote the Ricci curvature over $(V',\mu')$.
For $\mathbf{x},\mathbf{y} \in V\times V'$ with $\mathbf{x}\neq \mathbf{y}$,
we further denote by $\kappa_{(\alpha,\beta)}(\mathbf{x},\mathbf{y})$ the Ricci curvature over $(V, \mu) \square_{(\alpha,\beta)} (V', \mu')$.
We possess the following formulas (see \cite[Theorem 5.6]{OSY}, and also \cite[Theorem 3.1]{LLY} in the undirected case):
\begin{thm}[\cite{LLY}, \cite{OSY}]\label{thm:product Ricci}
\begin{enumerate}
\item If $x\neq y$ and $x'\neq y'$,
then
\begin{equation*}\label{eq:product Ricci}
\kappa_{(\alpha,\beta)}(\mathbf{x},\mathbf{y}) = \frac{\beta}{\alpha+\beta} \frac{d(x,y)}{d(x,y) + d'(x',y')} \kappa(x,y)+\frac{\alpha}{\alpha+\beta}\frac{d'(x',y')}{d(x,y) + d'(x',y')} \kappa'(x',y');
\end{equation*}
\item if $x\neq y$ and $x'= y'$,
then
\begin{equation*}\label{eq:pre product Ricci1}
\kappa_{(\alpha,\beta)}(\mathbf{x},\mathbf{y}) = \frac{\beta}{\alpha+\beta}\kappa(x,y);
\end{equation*}
\item if $x= y$ and $x'\neq y'$,
then
\begin{equation*}\label{eq:pre product Ricci2}
\kappa_{(\alpha,\beta)}(\mathbf{x},\mathbf{y}) = \frac{\alpha}{\alpha+\beta} \kappa'(x',y').
\end{equation*}
\end{enumerate}
\end{thm}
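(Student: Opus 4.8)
The plan is to reduce everything to the Laplacian characterization of the curvature in Theorem~\ref{thm:limit free formula}, after first recording how the Chung Laplacian of the product decomposes. First I would compute the weight, Perron measure, and mean kernel of $(V,\mu)\square_{(\alpha,\beta)}(V',\mu')$. A direct summation gives $\mu_{(\alpha,\beta)}(\mathbf{x})=(\alpha+\beta)\mu(x)\mu'(x')$, so that the transition kernel splits as $\frac{\beta}{\alpha+\beta}P(x,y)$ on edges that move the first coordinate and $\frac{\alpha}{\alpha+\beta}P'(x',y')$ on those that move the second; one then checks that $\mathfrak{m}\otimes\mathfrak{m}'$ solves the Perron--Frobenius equation \eqref{eq:Perron Frobenius} on the product, whence the reverse and mean kernels split in the same proportions. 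Writing $\mathcal{L}^{(1)}$ and $\mathcal{L}^{(2)}$ for the Chung Laplacians acting in the first and the second variable (with the other frozen), and using that $\mathcal{P},\mathcal{P}'$ are probability kernels, this yields the key identity
\begin{equation*}
\mathcal{L}_{(\alpha,\beta)}F=\frac{\beta}{\alpha+\beta}\mathcal{L}^{(1)}F+\frac{\alpha}{\alpha+\beta}\mathcal{L}^{(2)}F .
\end{equation*}
This is the functional refinement of Proposition~\ref{prop:product mean curv}, recovered by taking $F=\rho_{\mathbf{x}}=\rho_x+\rho'_{x'}$.

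For the upper bound in (1) I would feed split test functions into Theorem~\ref{thm:limit free formula}. Since $d_{(\alpha,\beta)}$ is additive by \eqref{eq:product distance}, a function $F(u,u')=f(u)+g(u')$ lies in $\mathcal{F}_{\mathbf{x}\mathbf{y}}$ exactly when $f\in\mathcal{F}_{xy}$ and $g\in\mathcal{F}'_{x'y'}$; for such $F$ the frozen-variable Laplacians reduce to $\mathcal{L}^{(1)}F=\mathcal{L}f$ and $\mathcal{L}^{(2)}F=\mathcal{L}'g$, so that $\nabla_{\mathbf{x}\mathbf{y}}\mathcal{L}_{(\alpha,\beta)}F$ is a fixed convex combination of $\nabla_{xy}\mathcal{L}f$ and $\nabla'_{x'y'}\mathcal{L}'g$ with the coefficients appearing in (1). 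Taking the infimum over $f$ and $g$ separately and noting that the infimum over all of $\mathcal{F}_{\mathbf{x}\mathbf{y}}$ is at most the infimum over this split subfamily yields ``$\le$''.

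The main work is the reverse inequality, where the obstacle is that a competitor $F\in\mathcal{F}_{\mathbf{x}\mathbf{y}}$ need not split. I would restrict $F$ to the faces through the poles, setting $f_0(u)=F(u,x')$, $f_1(u)=F(u,y')$, $g_0(u')=F(x,u')$, $g_1(u')=F(y,u')$. Because $F$ is $1$-Lipschitz and saturates $F(\mathbf{y})-F(\mathbf{x})=d(x,y)+d'(x',y')$, inserting the corners $(y,x')$ and $(x,y')$ forces equality in each factor, so $f_0,f_1\in\mathcal{F}_{xy}$ and $g_0,g_1\in\mathcal{F}'_{x'y'}$. Evaluating the split Laplacian at the two poles gives
\begin{equation*}
\bigl(d(x,y)+d'(x',y')\bigr)\nabla_{\mathbf{x}\mathbf{y}}\mathcal{L}_{(\alpha,\beta)}F=\frac{\beta}{\alpha+\beta}\bigl(\mathcal{L}f_1(y)-\mathcal{L}f_0(x)\bigr)+\frac{\alpha}{\alpha+\beta}\bigl(\mathcal{L}'g_1(y')-\mathcal{L}'g_0(x')\bigr),
\end{equation*}
and the difficulty is the mismatch $f_0\neq f_1$, $g_0\neq g_1$. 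I would resolve it by writing $\mathcal{L}f_1(y)-\mathcal{L}f_0(x)=\bigl(\mathcal{L}f_0(y)-\mathcal{L}f_0(x)\bigr)+\mathcal{L}h(y)$ with $h(u):=F(u,y')-F(u,x')$: the first summand is $\ge d(x,y)\kappa(x,y)$ by Theorem~\ref{thm:limit free formula}, while $1$-Lipschitzness gives $h\le d'(x',y')$ with equality at $u=y$, so $h$ is maximized at $y$ and therefore $\mathcal{L}h(y)\ge 0$ — this is the elementary fact that $\mathcal{L}=\Id$ minus a probability average is nonnegative at a maximizer. The same maximum-principle argument applied to $k(u'):=F(y,u')-F(x,u')$ handles the second group, and combining the two bounds produces ``$\ge$'', completing (1).

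Finally, cases (2) and (3) are the degenerate specializations $d'(x',y')=0$ and $d(x,y)=0$ of the same scheme: when the second pole coordinates agree one tests with $F(u,u')=f(u)$ for the upper bound and restricts to $f_0(u)=F(u,x')\in\mathcal{F}_{xy}$ for the lower bound, the vertical contribution being annihilated by the identical maximum principle, and symmetrically for (3). I expect the step most in need of care — and the true crux — to be the two points just above: verifying that the face restrictions land in $\mathcal{F}_{xy}$ and $\mathcal{F}'_{x'y'}$, and that the cross terms $\mathcal{L}h(y)$ and $\mathcal{L}'k(y')$ are nonnegative. Once this maximum principle is in place, the convex-combination structure of the curvature falls out of the Laplacian splitting.
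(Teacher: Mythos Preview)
The paper does not prove Theorem~\ref{thm:product Ricci}; it is quoted from \cite[Theorem~5.6]{OSY} (and \cite[Theorem~3.1]{LLY} in the undirected case), so there is no in-paper argument to compare against. Your proof is nonetheless correct. The Laplacian splitting $\mathcal{L}_{(\alpha,\beta)}=\tfrac{\beta}{\alpha+\beta}\mathcal{L}^{(1)}+\tfrac{\alpha}{\alpha+\beta}\mathcal{L}^{(2)}$ follows from the computation of $\mu_{(\alpha,\beta)}(\mathbf{x})$ and the Perron measure exactly as you indicate; the upper bound via split test functions is immediate; and the crux --- that the face restrictions $f_0,f_1,g_0,g_1$ lie in the right $\mathcal{F}$-classes, and that the cross terms $\mathcal{L}h(y)$, $\mathcal{L}'k(y')$ are nonnegative because $h,k$ attain their maxima at $y,y'$ --- is sound, since $\mathcal{P}$ is a probability kernel.

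As a point of comparison: the original \cite{LLY} argument for undirected graphs works directly with Wasserstein distances, constructing product couplings for one inequality and invoking Kantorovich duality (Proposition~\ref{prop:Kantorovich duality}) for the other. Your route via Theorem~\ref{thm:limit free formula} is cleaner in that it avoids explicit transport plans and handles the asymmetry of the directed setting transparently through the Chung Laplacian; the maximum-principle step replaces what in the coupling approach is an explicit estimate on $W(\nu^\epsilon_{\mathbf{x}},\nu^\epsilon_{\mathbf{y}})$. Both approaches ultimately exploit the additive structure \eqref{eq:product distance} of the product metric in the same way.
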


%%%%%%%%%%%%%%%%%%%%%%%%%%%%
\subsection{Comparison geometric results}\label{sec:Comparison geometric results}
In this subsection,
we review comparison geometric results under our lower Ricci curvature bound.
We begin with the diameter comparison (see \cite[Theorem 8.3]{OSY}, and also \cite[Theorem 4.1]{LLY} in the undirected case).
\begin{thm}[\cite{LLY}, \cite{OSY}]\label{thm:LLY diameter comparison}
Let $x,y \in V$ with $x \neq y$.
If $\kappa(x,y)>0$, then
\begin{equation*}
d(x,y) \leq \frac{\mathcal{H}(x,y)} {\kappa(x,y)}.
\end{equation*}
\end{thm}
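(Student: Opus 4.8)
The plan is to apply the Kantorovich-type characterization of the Ricci curvature in Theorem \ref{thm:limit free formula} to a single, carefully chosen test function, namely the distance function $\rho_x$ itself. The point is that $\rho_x\in\mathcal{F}_{xy}$: it is $1$-Lipschitz, and since $\rho_x(x)=0$ and $\rho_x(y)=d(x,y)$ we have $\nabla_{xy}\rho_x=1$. Hence Theorem \ref{thm:limit free formula} gives at once
\begin{equation*}
\kappa(x,y)\le \nabla_{xy}\mathcal{L}\rho_x=\frac{\mathcal{L}\rho_x(y)-\mathcal{L}\rho_x(x)}{d(x,y)}.
\end{equation*}
By the definition of the asymptotic mean curvature, $\mathcal{L}\rho_x(x)=\mathcal{H}_x$, so the numerator already contributes the term $-\mathcal{H}_x$ that we want to see in $\mathcal{H}(x,y)=-(\mathcal{H}_x+\olarrow{\mathcal{H}}_y)$.

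The crux is therefore to control the remaining term $\mathcal{L}\rho_x(y)$, and I expect this to be the main obstacle, since in the directed setting $\mathcal{L}\rho_x(y)$ need not be bounded by a universal constant, as it would be in the undirected case. First I would rewrite, using that $\mathcal{P}$ is Markov so that $\sum_{w}\mathcal{P}(y,w)=1$,
\begin{equation*}
\mathcal{L}\rho_x(y)=\sum_{w\in V}\mathcal{P}(y,w)\bigl(d(x,y)-d(x,w)\bigr),
\end{equation*}
and apply the triangle inequality $d(x,y)\le d(x,w)+d(w,y)$ to each summand, obtaining $\mathcal{L}\rho_x(y)\le \sum_{w}\mathcal{P}(y,w)\,d(w,y)$. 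Second, I would recognize the right-hand side as $-\olarrow{\mathcal{H}}_y$: indeed, since $\olarrow{\rho}_y(y)=0$,
\begin{equation*}
\olarrow{\mathcal{H}}_y=\mathcal{L}\olarrow{\rho}_y(y)=-\sum_{w\in V}\mathcal{P}(y,w)\,\olarrow{\rho}_y(w)=-\sum_{w\in V}\mathcal{P}(y,w)\,d(w,y).
\end{equation*}
This yields the key estimate $\mathcal{L}\rho_x(y)\le -\olarrow{\mathcal{H}}_y$.

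Finally I would combine the pieces:
\begin{equation*}
\kappa(x,y)\le \frac{\mathcal{L}\rho_x(y)-\mathcal{H}_x}{d(x,y)}\le \frac{-\olarrow{\mathcal{H}}_y-\mathcal{H}_x}{d(x,y)}=\frac{\mathcal{H}(x,y)}{d(x,y)}.
\end{equation*}
Since $\kappa(x,y)>0$ by hypothesis, dividing gives $d(x,y)\le \mathcal{H}(x,y)/\kappa(x,y)$, as desired. The only subtle point is that one must pair the forward distance function $\rho_x$ with the \emph{reverse} mean curvature $\olarrow{\mathcal{H}}_y$; the triangle inequality is exactly what converts the forward increments of $\rho_x$ into the reverse distances $d(w,y)$ appearing in $\olarrow{\mathcal{H}}_y$, which is why the mixed curvature $\mathcal{H}(x,y)$, rather than a universal constant, is the natural quantity governing the bound in the directed case.
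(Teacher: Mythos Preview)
Your argument is correct. The key steps---that $\rho_x\in\mathcal{F}_{xy}$, that Theorem \ref{thm:limit free formula} yields $\kappa(x,y)\le\nabla_{xy}\mathcal{L}\rho_x$, and that the triangle inequality converts $\mathcal{L}\rho_x(y)$ into $-\olarrow{\mathcal{H}}_y$---are all sound.

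Note, however, that the paper does not actually supply a proof of this statement: it is quoted from \cite[Theorem~8.3]{OSY} and used as a black box in the derivation of Theorem~\ref{thm:BM}. That said, your technique is precisely the one the paper employs for the closely related Laplacian comparison (Theorem~\ref{thm:Laplacian comparison}), where the reverse inequality is obtained by testing the limit-free formula against $-\olarrow{\rho}_x\in\mathcal{F}_{yx}$. So your proof is in the same spirit as the surrounding arguments and is in fact a clean, self-contained derivation that the paper could have included.
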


For later convenience,
we explain how to derive Theorem \ref{thm:BM} from Theorem \ref{thm:LLY diameter comparison}.
\begin{proof}[Proof of Theorem \ref{thm:BM}]
Let $x,y\in V$ satisfy $d(x,y)=\diam V$.
By combining Proposition \ref{prop:local to global} and Theorem \ref{thm:LLY diameter comparison},
we conclude
\begin{equation}\label{pf of BM}
K\leq \inf_{z\neq w}\kappa(z,w)\leq \kappa(x,y)\leq \frac{\mathcal{H}(x,y)}{d(x,y)}\leq \frac{\Lambda}{\diam V},
\end{equation}
where $\Lambda$ is defined as \eqref{eq:supconst}.
This completes the proof of Theorem \ref{thm:BM}.
\end{proof}

\begin{rem}\label{rem:pole}
Assume that
the equality in \eqref{eq:BM} holds.
Then the equalities in \eqref{pf of BM} also hold.
In particular,
for any $x,y\in V$ with $d(x,y)=\diam V$,
we see
\begin{equation*}
K= \kappa(x,y)= \frac{\mathcal{H}(x,y)}{d(x,y)}= \frac{\Lambda}{\diam V}.
\end{equation*}
\end{rem}

We next discuss the Laplacian comparison (see \cite[Theorem 1.1]{OSY}, and also \cite[Theorem 4.1]{MW} in the undirected case).
\begin{thm}[\cite{MW}, \cite{OSY}]\label{thm:Laplacian comparison}
Let $x\in V$.
On $V$, we have
\begin{equation*}\label{eq:Laplacian comparison}
\mathcal{L} \rho_{x} \geq K \rho_{x}+\mathcal{H}_x,\quad \mathcal{L}\olarrow{\rho}_x\geq K\olarrow{\rho}_x+\olarrow{\mathcal{H}}_{x}.
\end{equation*}
\end{thm}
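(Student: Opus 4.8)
The plan is to feed the distance functions directly into the variational characterization of $\kappa$ from Theorem~\ref{thm:limit free formula}, after checking that they (or their negatives) belong to the admissible class $\mathcal{F}$. Fix $x\in V$ and let $y\in V$ be arbitrary. The boundary case $y=x$ is immediate: by definition $\mathcal{L}\rho_{x}(x)=\mathcal{H}_{x}$ and $\rho_{x}(x)=0$, so the forward inequality holds with equality, and likewise $\mathcal{L}\olarrow{\rho}_{x}(x)=\olarrow{\mathcal{H}}_{x}$ settles the reverse inequality. Hence I may assume $y\neq x$ throughout.

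For the forward inequality I would observe that $\rho_{x}\in\Lip_{1}(V)$ and
\[
\nabla_{xy}\rho_{x}=\frac{\rho_{x}(y)-\rho_{x}(x)}{d(x,y)}=\frac{d(x,y)}{d(x,y)}=1,
\]
so that $\rho_{x}\in\mathcal{F}_{xy}$. Theorem~\ref{thm:limit free formula} then gives $\kappa(x,y)\le\nabla_{xy}\mathcal{L}\rho_{x}$. Expanding the right-hand side as $\bigl(\mathcal{L}\rho_{x}(y)-\mathcal{H}_{x}\bigr)/d(x,y)$ and rearranging yields $\mathcal{L}\rho_{x}(y)\ge\mathcal{H}_{x}+\kappa(x,y)\,d(x,y)$. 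Finally, Proposition~\ref{prop:local to global} furnishes $\kappa(x,y)\ge K$, and since $d(x,y)\ge 0$ I conclude $\mathcal{L}\rho_{x}(y)\ge\mathcal{H}_{x}+K\,d(x,y)=\mathcal{H}_{x}+K\rho_{x}(y)$. Note that this argument uses no sign hypothesis on $K$.

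The reverse inequality is where I expect the one genuine subtlety, because $\olarrow{\rho}_{x}$ need not be $1$-Lipschitz and so does not itself sit in any $\mathcal{F}$. The remedy is to pass to $-\olarrow{\rho}_{x}$ and to orient the gradient from $y$ to $x$: the triangle inequality $d(v,x)\le d(v,w)+d(w,x)$ shows $-\olarrow{\rho}_{x}\in\Lip_{1}(V)$, while $\nabla_{yx}(-\olarrow{\rho}_{x})=\bigl(\olarrow{\rho}_{x}(y)-\olarrow{\rho}_{x}(x)\bigr)/d(y,x)=1$, so $-\olarrow{\rho}_{x}\in\mathcal{F}_{yx}$. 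Applying Theorem~\ref{thm:limit free formula} to the pair $(y,x)$ gives $\kappa(y,x)\le\nabla_{yx}\mathcal{L}(-\olarrow{\rho}_{x})$, and by linearity $\mathcal{L}(-\olarrow{\rho}_{x})=-\mathcal{L}\olarrow{\rho}_{x}$, whence the right-hand side equals $\bigl(\mathcal{L}\olarrow{\rho}_{x}(y)-\olarrow{\mathcal{H}}_{x}\bigr)/d(y,x)$. Rearranging and invoking $\kappa(y,x)\ge K$ exactly as before produces $\mathcal{L}\olarrow{\rho}_{x}(y)\ge\olarrow{\mathcal{H}}_{x}+K\,d(y,x)=\olarrow{\mathcal{H}}_{x}+K\olarrow{\rho}_{x}(y)$. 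The only real obstacle is thus the orientation bookkeeping in this last step; once one commits to $-\olarrow{\rho}_{x}$ together with $\nabla_{yx}$, the argument mirrors the forward case verbatim.
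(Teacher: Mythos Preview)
Your proof is correct and follows essentially the same route as the paper: the paper cites the forward inequality from \cite{OSY} and proves the reverse one exactly as you do, by observing that $-\olarrow{\rho}_{x}\in\mathcal{F}_{yx}$ and feeding it into Theorem~\ref{thm:limit free formula} together with Proposition~\ref{prop:local to global}. Your additional write-up of the forward case (via $\rho_{x}\in\mathcal{F}_{xy}$) is the expected argument and matches the spirit of the original.
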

\begin{proof}
The first one has been proved in \cite{OSY}.
We show the reverse one.
We fix $y\in V$.
We may assume $y\neq x$.
Notice that $-\olarrow{\rho}_x\in \mathcal{F}_{yx}$.
Thanks to Proposition \ref{prop:local to global} and Theorem \ref{thm:limit free formula},
\begin{equation*}
K \leq \kappa(y,x) \leq \nabla_{yx} \mathcal{L}(-\olarrow{\rho}_x) = \frac{\mathcal{L}(-\olarrow{\rho}_x)(x) - \mathcal{L}(-\olarrow{\rho}_x)(y)}{d(y,x)} = \frac{-\olarrow{\mathcal{H}}_x + \mathcal{L}\olarrow{\rho}_x(y)}{\olarrow{\rho}_x(y)}.
\end{equation*}
This proves the desired one.
\end{proof}

We finally consider the eigenvalue comparison.
Let
\begin{equation*}
0=\lambda_{0}(V)<\lambda_{1}(V)\leq \lambda_{2}(V) \leq \dots \leq \lambda_{n-1}(V)
\end{equation*}
stand for the eigenvalues of $\mathcal{L}$.
We have the following eigenvalue comparison of Lichnerowicz type (see \cite[Theorem 8.2]{OSY}, and also \cite[Theorem 4.2]{LLY} in the undirected case):
\begin{thm}[\cite{LLY}, \cite{OSY}]\label{thm:Lic}
If $K>0$,
then we have
\begin{equation*}
\lambda_{1}(V)\geq K.
\end{equation*}
\end{thm}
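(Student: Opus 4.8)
The plan is to mimic the classical Lichnerowicz argument, exploiting the variational characterization of the Ricci curvature in Theorem \ref{thm:limit free formula} together with the global bound in Proposition \ref{prop:local to global}. Since $\mathcal{L}$ is symmetric with respect to $\mathfrak{m}$, I may take a real-valued eigenfunction $f$ associated with $\lambda_{1}(V)$, so that $\mathcal{L} f=\lambda_{1}(V)\,f$. Because $\lambda_{1}(V)>0$, the function $f$ is nonconstant.

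First I would extract from $f$ a competitor in some $\mathcal{F}_{xy}$ for a well-chosen pair $(x,y)$. Set
\begin{equation*}
M:=\max_{x\neq y}\nabla_{xy}f=\max_{x\neq y}\frac{f(y)-f(x)}{d(x,y)},
\end{equation*}
where the maximum is attained since $V$ is finite, say at a pair $(x_{0},y_{0})$; note $M>0$ as $f$ is nonconstant. By the very definition of $M$, the rescaled function $g:=f/M$ is $1$-Lipschitz and satisfies $\nabla_{x_{0}y_{0}}g=1$, hence $g\in \mathcal{F}_{x_{0}y_{0}}$.

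The key step is to feed $g$ into the characterization. Since $\mathcal{L}$ is linear and $f$ is an eigenfunction, $\mathcal{L} g=\lambda_{1}(V)\,g$, so $\nabla_{x_{0}y_{0}}\mathcal{L} g=\lambda_{1}(V)\,\nabla_{x_{0}y_{0}}g=\lambda_{1}(V)$. On the other hand, Theorem \ref{thm:limit free formula} gives $\kappa(x_{0},y_{0})\leq \nabla_{x_{0}y_{0}}\mathcal{L} g$, and Proposition \ref{prop:local to global} gives $K\leq \kappa(x_{0},y_{0})$. Chaining these inequalities yields $K\leq \lambda_{1}(V)$, as desired.

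The argument is short, and I do not anticipate a serious obstacle; the only points requiring care are that a real eigenfunction exists (guaranteed by the $\mathfrak{m}$-symmetry of $\mathcal{L}$) and that $M$ is strictly positive and attained (guaranteed by the nonconstancy of $f$ and the finiteness of $V$). The non-symmetry of $d$ causes no difficulty, since the maximum defining $M$ is taken over ordered pairs, matching the definition of the best Lipschitz constant and of $\nabla_{x_{0}y_{0}}$.
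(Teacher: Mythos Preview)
Your argument is correct. The paper itself does not supply a proof of Theorem~\ref{thm:Lic}; it merely records the statement and cites \cite[Theorem 8.2]{OSY} and \cite[Theorem 4.2]{LLY}. Your proof is precisely the one suggested by the toolkit the paper assembles: pick an eigenfunction, rescale by its best one-sided Lipschitz constant to land in some $\mathcal{F}_{x_{0}y_{0}}$, and then combine Theorem~\ref{thm:limit free formula} with Proposition~\ref{prop:local to global}. This is also essentially the argument in \cite{OSY} (which in turn follows \cite{MW} and \cite{LLY}), so there is no substantive divergence to discuss. The side remarks you flag---existence of a real eigenfunction from the $\mathfrak{m}$-symmetry of $\mathcal{L}$, positivity and attainment of $M$ from nonconstancy of $f$ and finiteness of $V$, and the fact that the ordered-pair maximum matches the non-symmetric Lipschitz condition---are exactly the points that need checking, and they all go through.
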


%%%%%%%%%%%%%%%%%%%%%%%%%%%%
%%%%%%%%%%%%%%%%%%%%%%%%%%%%
%%%%%%%%%%%%%%%%%%%%%%%%%%%%
\section{Proof of the main results}\label{sec:Proof}

In this section,
we prove our main theorems.

%%%%%%%%%%%%%%%%%%%%%%%%%%%%
\subsection{Structure results}\label{sec:Structure}

A function $f:V\to \mathbb{R}$ is said to be \textit{superharmonic} when
\begin{equation}\label{eq:harmonic}
\mathcal{L}f\geq 0
\end{equation}
over $V$.
We first show the following minimum principle (cf. \cite[Proposition 1.39]{G}):
\begin{lem}\label{lem:minimum principle}
Any superharmonic functions must be constant.
\end{lem}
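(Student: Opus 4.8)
The plan is to imitate the classical minimum principle for superharmonic functions. The first thing I would record is that the mean kernel $\mathcal{P}$ is stochastic, i.e. $\sum_{y\in V}\mathcal{P}(x,y)=1$ for every $x$; this holds because $P$ is a transition kernel, while the reverse kernel $\olarrow{P}$ sums to one precisely by the Perron identity \eqref{eq:Perron Frobenius}. With this in hand, superharmonicity $\mathcal{L}f\geq 0$ reads
\begin{equation*}
f(x)\geq \sum_{y\in V}\mathcal{P}(x,y)\,f(y)\qquad\text{for all } x\in V,
\end{equation*}
so at every vertex $f(x)$ dominates the $\mathcal{P}$-average of $f$ over $\mathcal{N}_x$.

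Next I would run the local step at a minimizer. Since $V$ is finite, $f$ attains its minimum $m:=\min_V f$ on a nonempty set $A:=\{x\in V \mid f(x)=m\}$. Fix $x\in A$. Using $f(y)\geq m$ together with $\sum_{y}\mathcal{P}(x,y)=1$,
\begin{equation*}
m=f(x)\geq \sum_{y\in V}\mathcal{P}(x,y)\,f(y)\geq \sum_{y\in V}\mathcal{P}(x,y)\,m=m,
\end{equation*}
so both inequalities are equalities; in particular $\sum_{y\in V}\mathcal{P}(x,y)\,(f(y)-m)=0$, and since every summand is nonnegative we obtain $f(y)=m$ whenever $\mathcal{P}(x,y)>0$. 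By Remark \ref{rem:positive} these are exactly the $y\in\mathcal{N}_x$; in particular every out-neighbor of $x$ lies in $A$.

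Finally I would propagate using strong connectivity. Given an arbitrary $z\in V$, choose a directed path $x=x_0\rarrow x_1\rarrow\dots\rarrow x_l=z$ from a fixed $x\in A$. Each $x_{i+1}\in N_{x_i}\subseteq \mathcal{N}_{x_i}$, so $\mathcal{P}(x_i,x_{i+1})>0$, and applying the local step inductively along the path gives $x_i\in A$ for all $i$, hence $z\in A$. Therefore $A=V$ and $f\equiv m$ is constant.

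The argument is short, and there is no deep obstacle; the two points requiring care are the verification that $\mathcal{P}$ is stochastic, which is exactly where the Perron measure enters, and the observation that directed strong connectivity already suffices for the propagation, since each forward edge of a directed path makes $\mathcal{P}$ positive. In particular no symmetry of the support of $\mathcal{P}$ is needed.
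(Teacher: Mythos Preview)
Your proof is correct and follows essentially the same route as the paper's: define the minimum set, use stochasticity of $\mathcal{P}$ together with superharmonicity to show it absorbs $\mathcal{N}_x$, then propagate along a directed path via strong connectivity. The only cosmetic differences are that you make the stochasticity of $\mathcal{P}$ explicit (the paper uses it silently), and you propagate along an arbitrary directed path rather than a minimal geodesic, which of course makes no difference.
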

\begin{proof}
Let $f:V\to \mathbb{R}$ be superharmonic.
Set 
\begin{equation*}
M := \inf_{x \in V} f(x),\quad \Omega:= \{ x \in V \mid f(x) = M \}. 
\end{equation*}
Since $V$ is finite,
$\Omega$ is non-empty.
We first show that
if $x\in \Omega$,
then $\mathcal{N}_{x} \subset \Omega$.
By \eqref{eq:harmonic} and Remark \ref{rem:positive},
we have
\begin{equation*}
M = f(x) \geq \sum_{y \in V} \mathcal{P}(x,y) f(y) = \sum_{y \in \mathcal{N}_{x}} \mathcal{P}(x,y) f(y) \ge  M \sum_{y \in \mathcal{N}_{x}} \mathcal{P}(x,y)  = M. 
\end{equation*}
In particular,
all equalities hold,
and hence $\mathcal{N}_{x} \subset \Omega$.

We now prove $V= \Omega$.
Fix $x\in V$.
We can take $x_0\in \Omega$ since $\Omega \neq \emptyset$.
We also take a minimal geodesic $\{ x_ i \}_{i=0}^l$ from $x_0$ to $x$.
From the above argument,
we see $x_1\in \Omega$.
Furthermore,
by induction,
we conclude $x_l\in \Omega$.
Thus $V=\Omega$, and hence $f\equiv M$.
\end{proof}

We next deduce the following superharmonicity:
\begin{lem}\label{lem:harmonic}
Under the same setting as in Theorem \ref{thm:Cheng},
for any $x,y\in M$ with $d(x,y)=\diam V$,
the function $\rho_x+\olarrow{\rho}_y$ is superharmonic.
\end{lem}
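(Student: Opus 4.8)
The plan is to show directly that $\mathcal{L}(\rho_x+\olarrow{\rho}_y)\geq 0$ on all of $V$ by combining the two Laplacian comparison inequalities from Theorem \ref{thm:Laplacian comparison} and then using the rigidity extracted from Remark \ref{rem:pole}. Applying the theorem to the distance function $\rho_x$ and to the reverse distance function $\olarrow{\rho}_y$, I obtain
\begin{equation*}
\mathcal{L}\rho_x\geq K\rho_x+\mathcal{H}_x,\qquad \mathcal{L}\olarrow{\rho}_y\geq K\olarrow{\rho}_y+\olarrow{\mathcal{H}}_y,
\end{equation*}
and adding these and using linearity of $\mathcal{L}$ gives
\begin{equation*}
\mathcal{L}(\rho_x+\olarrow{\rho}_y)\geq K(\rho_x+\olarrow{\rho}_y)+(\mathcal{H}_x+\olarrow{\mathcal{H}}_y)=K(\rho_x+\olarrow{\rho}_y)-\mathcal{H}(x,y),
\end{equation*}
where the last equality is just the definition of the mixed asymptotic mean curvature.

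The key point is now to bound the right-hand side from below by $0$. Since $x,y$ realize the diameter, every $z\in V$ satisfies $\rho_x(z)+\olarrow{\rho}_y(z)=d(x,z)+d(z,y)\geq d(x,y)=\diam V$ by the triangle inequality, so $\rho_x+\olarrow{\rho}_y\geq \diam V$ pointwise. Because $K>0$, multiplying this inequality by $K$ preserves the direction, giving $K(\rho_x+\olarrow{\rho}_y)\geq K\diam V$. Then, invoking the equality hypothesis \eqref{eq:equalBM} through Remark \ref{rem:pole}, I use $\mathcal{H}(x,y)=K\,d(x,y)=K\diam V$ for the diameter-realizing pair, whence
\begin{equation*}
K(\rho_x+\olarrow{\rho}_y)-\mathcal{H}(x,y)\geq K\diam V-K\diam V=0.
\end{equation*}
Chaining this with the displayed comparison inequality yields $\mathcal{L}(\rho_x+\olarrow{\rho}_y)\geq 0$, which is exactly superharmonicity.

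The only genuinely delicate step is making sure the two ingredients of the equality case are applied to the correct quantities: Remark \ref{rem:pole} supplies $\mathcal{H}(x,y)=K\,d(x,y)$ for precisely the pairs with $d(x,y)=\diam V$, and it is essential that this same pair $(x,y)$ is the one used both to form $\rho_x+\olarrow{\rho}_y$ and to bound the constant $\mathcal{H}(x,y)$. The triangle-inequality lower bound $\rho_x+\olarrow{\rho}_y\geq \diam V$ is what converts the pointwise Laplacian comparison into a global superharmonicity statement, and its combination with the positivity $K>0$ is what makes the two constants cancel exactly rather than leaving a residual term. I expect no serious obstacle beyond bookkeeping, since all the analytic content is already packaged in Theorem \ref{thm:Laplacian comparison} and the rigidity of Remark \ref{rem:pole}; the lemma is essentially the discrete transcription of the classical observation that, at maximal diameter, the sum of distance functions from the two poles is superharmonic.
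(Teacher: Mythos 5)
Your proof is correct and follows exactly the paper's own argument: add the two Laplacian comparison inequalities from Theorem \ref{thm:Laplacian comparison}, bound $\rho_x+\olarrow{\rho}_y$ below by $d(x,y)$ via the (directed) triangle inequality, and cancel against $\mathcal{H}(x,y)=K\,d(x,y)$ supplied by Remark \ref{rem:pole}. The paper's version is merely a more compressed writing of the same chain of inequalities.
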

\begin{proof}
In view of Remark \ref{rem:pole},
\begin{equation*}
Kd(x,y)=\mathcal{H}(x,y).
\end{equation*}
This together with Theorem \ref{thm:Laplacian comparison} and the triangle inequality yields
\begin{equation}\label{eq:proofharm}
\mathcal{L}(\rho_x+\olarrow{\rho}_y)\geq K(\rho_x+\olarrow{\rho}_y)+(\mathcal{H}_x+\olarrow{\mathcal{H}}_{y})
                                           \geq Kd(x,y)-\mathcal{H}(x,y)=0.
\end{equation}
We complete the proof.
\end{proof}

We further show the following:
\begin{lem}\label{lem:subconstant}
Let $x,y\in V$ with $x \neq y$,
and let $\{x_{i}\}^{l}_{i=0}$ be a minimal geodesic from $x$ to $y$ with $l=d(x,y)$.
Set $z:=x_{a}$ and $w:=x_{b}$ for some $a,b \in \{0,\dots,l\}$ with $a < b$.
If $\kappa(x,y)=K$,
then $\kappa(z,w)=K$.
\end{lem}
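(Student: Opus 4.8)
The plan is to deduce the result from the chain inequality of Lemma \ref{lem:chain} by a squeeze argument. The key observation is that \emph{every} Ricci curvature term appearing on the right-hand sides of \eqref{eq:chain1}, \eqref{eq:chain2}, \eqref{eq:chain3} is bounded below by $K$. Indeed, consecutive vertices of a minimal geodesic satisfy $x_i \rarrow x_{i+1}$, so by the very definition \eqref{eq:infconst} of $K$ we have $\kappa(x_i,x_{i+1}) \geq K$ for each $i$; and since $a<b$ forces $z \neq w$, Proposition \ref{prop:local to global} gives $\kappa(z,w) \geq K$ as well. The strategy is then to feed the hypothesis $\kappa(x,y)=K$ into the left-hand side and check that the lower bounds on the right-hand side sum to exactly the same value, leaving no slack.

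First I would treat the generic case $a \geq 1$, $b \leq l-1$ using \eqref{eq:chain1}. Since $d(x,y)=l$, the hypothesis makes the left-hand side equal to $Kl$. On the right-hand side the first sum has $a$ terms, the last sum has $l-b$ terms, and $d(z,w)=b-a$; replacing each curvature by its lower bound $K$ would turn the right-hand side into $aK + K(b-a) + (l-b)K = Kl$. This would yield
\[
Kl = \kappa(x,y)\,d(x,y) \geq \sum_{i=0}^{a-1}\kappa(x_i,x_{i+1}) + \kappa(z,w)\,d(z,w) + \sum_{i=b}^{l-1}\kappa(x_i,x_{i+1}) \geq Kl,
\]
forcing every inequality to be an equality. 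In particular $\kappa(z,w)\,d(z,w) = K\,d(z,w)$, and since $d(z,w)=b-a>0$ we would conclude $\kappa(z,w)=K$.

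The remaining cases are handled identically: when $b=l$ I would use \eqref{eq:chain2}, where the single term $\kappa(z,y)\,d(z,y)$ with $d(z,y)=l-a$ combines with the $a$-term sum to give $Kl$ again; when $a=0$ I would use \eqref{eq:chain3}, with $d(x,w)=b$ and the last sum carrying $l-b$ terms; and the degenerate case $a=0$, $b=l$ is trivial since then $(z,w)=(x,y)$. I do not anticipate any genuine obstacle here, as the argument is a pure squeeze; the only point demanding care is the bookkeeping of how many summands occur in each chain inequality and the verification that their lower bounds add up to precisely $\kappa(x,y)\,d(x,y)=Kl$, so that equality must propagate to the single term $\kappa(z,w)\,d(z,w)$.
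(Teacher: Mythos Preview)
Your proposal is correct and follows essentially the same route as the paper: both feed the hypothesis into the chain inequality of Lemma~\ref{lem:chain}, bound the edge curvatures $\kappa(x_i,x_{i+1})$ from below by $K$, and squeeze. The only cosmetic difference is that the paper isolates the step ``$\kappa(x,y)\leq K \Rightarrow \kappa(z,w)\leq K$'' (leaving $\kappa(z,w)$ on the right rather than bounding it by $K$ as well) and then invokes Proposition~\ref{prop:local to global} separately, whereas you fold that inequality into the squeeze and read off equality for every term at once.
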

\begin{proof}
In virtue of Proposition \ref{prop:local to global},
it is enough to prove that if $\kappa(x,y)\leq K$, then $\kappa(z,w)\leq K$.
We only consider the case where $a\geq 1$ and $b\leq l-1$.
By \eqref{eq:chain1} in Lemma \ref{lem:chain},
\begin{align*}
K d(x,y)&\geq \kappa(x,y) d(x,y)\geq \sum^{a-1}_{i=0}\kappa(x_{i},x_{i+1})+\kappa(z,w)d(z,w)+\sum^{l-1}_{i=b}\kappa(x_{i},x_{i+1})\\
              &\geq K(l-b+a)+\kappa(z,w)d(z,w)= Kd(x,y)-Kd(z,w)+\kappa(z,w)d(z,w).
\end{align*}
Hence, $\kappa(z,w)\leq K$.
In the other cases,
it suffices to use \eqref{eq:chain2}, \eqref{eq:chain3} instead of \eqref{eq:chain1}.
\end{proof}

We are now in a position to prove Theorem \ref{thm:Cheng}.

\begin{proof}[Proof of Theorem \ref{thm:Cheng}]
Assume that $K>0$ and \eqref{eq:equalBM}.
Lemmas \ref{lem:minimum principle} and \ref{lem:harmonic} lead us that
\begin{equation}\label{eq:rigid}
\rho_x+\olarrow{\rho}_y \equiv d(x,y)
\end{equation}
on $V$, and we complete the proof of the first claim of Theorem \ref{thm:Cheng}.
The second one directly follows from Remark \ref{rem:pole} and Lemma \ref{lem:subconstant}.
Also,
the third one can be derived from Remark \ref{rem:pole}.
Thus $(V,\mu)$ is spherically suspended.
\end{proof}

%%%%%%%%%%%%%%%%%%%%%%%%%%%%
\subsection{Sharpness of comparison geometric results}\label{sec:Sharpness}
Here we show that
under the same setting as in Theorem \ref{thm:Cheng},
the equalities in other comparison geometric results hold.
First,
we mention the Laplacian comparison:
\begin{thm}\label{thm:rigidLapcomp}
Under the same setting as in Theorem \ref{thm:Cheng},
the equalities in Theorem \ref{thm:Laplacian comparison} hold.
More precisely,
on $V$,
\begin{equation*}
\mathcal{L} \rho_{x}= K \rho_{x}+\mathcal{H}_x,\quad \mathcal{L}\olarrow{\rho}_y= K\olarrow{\rho}_y+\olarrow{\mathcal{H}}_{y}.
\end{equation*}
\end{thm}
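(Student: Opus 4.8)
The plan is to run a saturation (equality-case) argument: I will exploit the rigidity \eqref{eq:rigid} obtained in the proof of Theorem~\ref{thm:Cheng}, and use that the Laplacian comparison of Theorem~\ref{thm:Laplacian comparison} furnishes two \emph{separately} nonnegative defects whose sum I will force to vanish. Throughout, $x,y$ denote poles with $d(x,y)=\diam V$, as in the statement.

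First I would recall from \eqref{eq:rigid} that $\rho_{x}+\olarrow{\rho}_{y}\equiv d(x,y)$ on $V$. Since $\mathcal{L}$ annihilates constant functions (because $\sum_{y\in V}\mathcal{P}(x,y)=1$ for every $x$), this yields at once $\mathcal{L}(\rho_{x}+\olarrow{\rho}_{y})\equiv 0$ on $V$.

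Next I would revisit the chain of pointwise inequalities \eqref{eq:proofharm} from the proof of Lemma~\ref{lem:harmonic},
\[
\mathcal{L}(\rho_{x}+\olarrow{\rho}_{y})\geq K(\rho_{x}+\olarrow{\rho}_{y})+(\mathcal{H}_{x}+\olarrow{\mathcal{H}}_{y})\geq Kd(x,y)-\mathcal{H}(x,y)=0.
\]
As the leftmost member vanishes identically by the previous step, the entire chain is pinched between $0$ and $0$, so every inequality is an equality at each vertex. In particular the first inequality, obtained by summing the two comparison estimates of Theorem~\ref{thm:Laplacian comparison}, becomes the pointwise identity $\mathcal{L}(\rho_{x}+\olarrow{\rho}_{y})=K(\rho_{x}+\olarrow{\rho}_{y})+(\mathcal{H}_{x}+\olarrow{\mathcal{H}}_{y})$ on $V$.

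Finally, invoking the linearity of $\mathcal{L}$, I would rewrite this identity at every vertex as
\[
\bigl(\mathcal{L}\rho_{x}-K\rho_{x}-\mathcal{H}_{x}\bigr)+\bigl(\mathcal{L}\olarrow{\rho}_{y}-K\olarrow{\rho}_{y}-\olarrow{\mathcal{H}}_{y}\bigr)=0.
\]
By Theorem~\ref{thm:Laplacian comparison} each bracketed quantity is nonnegative on all of $V$; a vanishing sum of two nonnegative numbers forces both to be zero, which is precisely the asserted pair of equalities. I do not expect a genuine obstacle: the content lies entirely in tracking which inequalities saturate. The only point deserving care is that the two comparison defects are nonnegative \emph{at every vertex individually}, so that their pointwise sum being zero forces each to vanish pointwise, rather than merely in some averaged sense.
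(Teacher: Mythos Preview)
Your argument is correct and is essentially the same as the paper's: both use \eqref{eq:rigid} to force the chain \eqref{eq:proofharm} to collapse to equalities, and then split the resulting identity into the two nonnegative Laplacian-comparison defects to conclude that each vanishes. The paper merely states this in one line, whereas you spell out the pointwise nonnegativity step explicitly.
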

\begin{proof}
By \eqref{eq:rigid},
the equalities in \eqref{eq:proofharm} hold.
We arrive at the desired assertion.
\end{proof}

We next discuss the eigenvalue comparison.
\begin{thm}\label{thm:rigideigcomp}
Under the same setting as in Theorem \ref{thm:Cheng},
the equality in Theorem \ref{thm:Lic} holds.
More precisely,
\begin{equation*}
\lambda_1(V)=K.
\end{equation*}
\end{thm}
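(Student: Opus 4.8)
The plan is to establish $\lambda_1(V)\le K$, which combined with the lower bound $\lambda_1(V)\ge K$ from Theorem \ref{thm:Lic} yields the desired equality. The key observation is that the \emph{rigidity} in the Laplacian comparison, already recorded in Theorem \ref{thm:rigidLapcomp}, produces an explicit eigenfunction of $\mathcal{L}$ with eigenvalue exactly $K$; this is the discrete counterpart of detecting a first eigenfunction from the equality case of the smooth Laplacian comparison in the Lichnerowicz--Obata rigidity.

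First I would fix poles $x,y\in V$ with $d(x,y)=\diam V$. By Theorem \ref{thm:rigidLapcomp}, on all of $V$ we have the identity $\mathcal{L}\rho_x=K\rho_x+\mathcal{H}_x$, in which $\mathcal{H}_x$ is a constant. Since $K>0$, I would then set
\[
g:=\rho_x+\frac{\mathcal{H}_x}{K}.
\]
Because $g$ differs from $\rho_x$ only by a constant, we compute $\mathcal{L}g=\mathcal{L}\rho_x=K\rho_x+\mathcal{H}_x=Kg$, so $g$ is an eigenfunction of $\mathcal{L}$ for the eigenvalue $K$.

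Next I would check that $g$ is non-constant, which is immediate since $\rho_x(x)=0$ while $\rho_x(y)=d(x,y)=\diam V>0$. As $\mathcal{L}$ is symmetric with respect to $\mathfrak{m}$ and the eigenvalue $0=\lambda_0(V)$ is simple with the constant functions as its eigenspace (this is precisely where the strict inequality $\lambda_0(V)<\lambda_1(V)$ is used), the non-constant eigenfunction $g$ cannot lie in the $\lambda_0(V)$-eigenspace. Hence the eigenvalue $K$ realized by $g$ is one of $\lambda_1(V),\dots,\lambda_{n-1}(V)$, giving $\lambda_1(V)\le K$. Together with Theorem \ref{thm:Lic} this proves $\lambda_1(V)=K$.

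I do not expect any serious obstacle here: once Theorem \ref{thm:rigidLapcomp} is available, the construction of $g$ is forced and the verification that $\mathcal{L}g=Kg$ is a one-line calculation. The only point requiring (minor) care is ensuring that the eigenvalue carried by $g$ is genuinely nonzero rather than $\lambda_0(V)=0$, which is exactly why one verifies that $g$ is non-constant and invokes the simplicity of the bottom of the spectrum.
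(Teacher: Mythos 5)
Your proposal is correct and follows essentially the same route as the paper: both construct the eigenfunction $\rho_x+\mathcal{H}_x/K$ from the rigidity of the Laplacian comparison (Theorem \ref{thm:rigidLapcomp}), verify $\mathcal{L}f=Kf$, and combine the resulting upper bound $\lambda_1(V)\leq K$ with Theorem \ref{thm:Lic}. Your extra step of checking that the eigenfunction is non-constant (so that the eigenvalue $K$ is not mistaken for $\lambda_0(V)=0$) is a point the paper leaves implicit, but it is the same argument.
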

\begin{proof}
We define a function $f:V\to \mathbb{R}$ by
\begin{equation*}
f:=\rho_x+\frac{\mathcal{H}_x}{K}.
\end{equation*}
Due to Theorem \ref{thm:rigidLapcomp},
\begin{equation*}
\mathcal{L}f=\mathcal{L}\rho_x=K\rho_x+\mathcal{H}_x=Kf.
\end{equation*}
In particular,
$K$ is an eigenvalue of $\mathcal{L}$ with eigenfunction $f$.
We obtain $\lambda_1(V)\leq K$,
and the equality in Theorem \ref{thm:Lic} holds.
\end{proof}

Cushing et al. \cite{CKKLMP} have proved Theorem \ref{thm:rigideigcomp} for unweighted, undirected regular graphs (see \cite[Theorem 1.5]{CKKLMP}).

%%%%%%%%%%%%%%%%%%%%%%%%%%%%
%%%%%%%%%%%%%%%%%%%%%%%%%%%%
%%%%%%%%%%%%%%%%%%%%%%%%%%%%
\section{Examples}\label{sec:Examples}

In this last section,
we aim to present (purely) directed graphs having maximal diameter.
For $k\in \mathbb{R}$,
we say that
$(V,\mu)$ has \textit{constant Ricci curvature $k$} if $\kappa(x,y)=k$ for all edges $(x, y) \in E$.
In that case
we denote by $\kappa(V,\mu)=k$.

%%%%%%%%%%%%%%%%%%%%%%%%%%%%
\subsection{Directed complete graphs}\label{sec:Directed complete}

For $n\geq 3$,
we consider the unweighted directed complete graph with $n$ vertices,
denoted by $\mathcal{K}_n$ (see Figure \ref{fig:complete}).

\begin{figure}[http]
        \begin{center} 
          \includegraphics[scale=0.8]{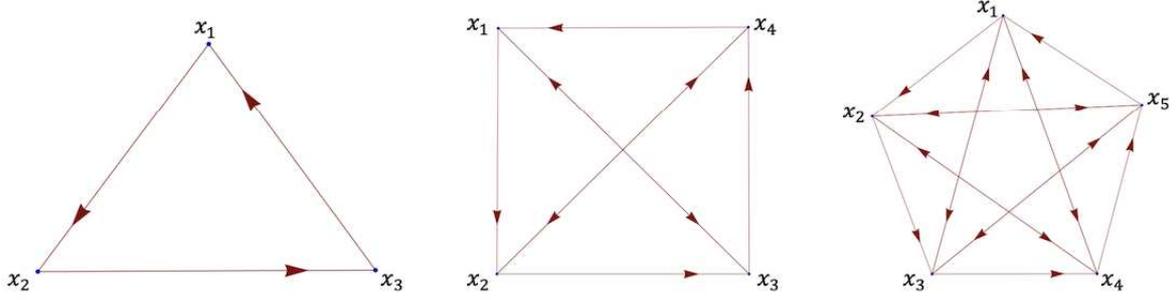}
          \caption{Directed complete graphs}
          \label{fig:complete}
        \end{center}
\end{figure}

It is easy to see that
\begin{equation}\label{eq:completediammean}
\diam \mathcal{K}_n=2,\quad \mathcal{H}_{x_{i}}=\olarrow{\mathcal{H}}_{x_{i}}=-\left(  1+\cfrac{1}{2 (n - 2)}  \right),\quad \Lambda=2+\cfrac{1}{n - 2}
\end{equation}
for all $n\geq 3$ and $i=1,\dots,n$.
The authors \cite{OSY} have calculated the Ricci curvature of the edges of $\mathcal{K}_n$ as follows (see \cite[Example 4.1]{OSY}):
(1) $\kappa(\mathcal{K}_3)=3/2$;
(2) if $n=4$,
then we have
\begin{align*}
\kappa (x_1, x_i) = \begin{cases}
		1 & \text{if $i = 2$},\\
		\cfrac{3}{2} & \text{if $i=3$}; 
		\end{cases}
\end{align*}
(3) if $n=5$,
then we have
\begin{align*}
\kappa (x_1, x_i) = \begin{cases}
		1 & \text{if $i = 2$},\\
		\cfrac{7}{6} & \text{if $i=3$ or $4$}; 
		\end{cases}
\end{align*}
(4) if $n \geq 6$,
then we have
\begin{align*}
\kappa (x_1, x_i) = \begin{cases}
		1 & \text{if $i = 2$ or $i\in \{4, \dots, n-2\}$},\\
		1+\cfrac{1}{2 (n - 2)} & \text{if $i=3$ or $n-1$}.
		\end{cases}
\end{align*}
In particular,
\begin{equation}\label{eq:completeRicci}
K = \begin{cases}
		\cfrac{3}{2} & \text{if $n=3$},\\
		       1                & \text{if $n\geq 4$}.
		\end{cases}
\end{equation}
By \eqref{eq:completediammean}, \eqref{eq:completeRicci},
the equality \eqref{eq:equalBM} in holds on $\mathcal{K}_n$ if and only if $n=3$.
Actually,
$\mathcal{K}_n$ is not spherically suspended for $n\geq 4$ although it is covered by minimal geodesics from $x_1$ to $x_n$.

\begin{rem}
It is remarkable that
undirected (usual) complete graph with $3$ vertices does not have maximal diameter.
\end{rem}

%%%%%%%%%%%%%%%%%%%%%%%%%%%%
\subsection{Directed triforce graphs}\label{sec:triforce}

We next observe the unweighted directed triforce graph $\mathcal{T}$ (see Figure \ref{fig:triforce}).

\begin{figure}[http]
        \begin{center} 
          \includegraphics[scale=0.5]{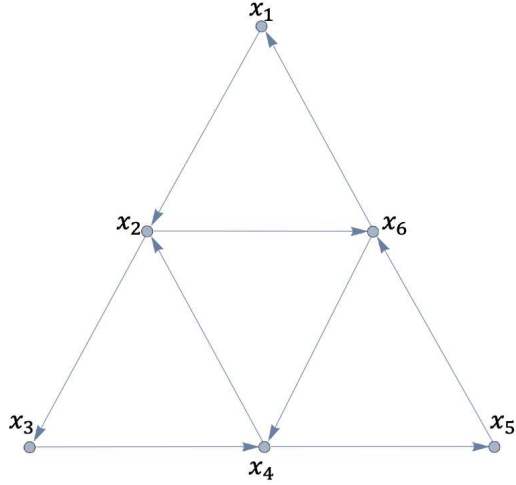}
          \caption{Directed triforce graph}
          \label{fig:triforce}
        \end{center}
\end{figure}
We verify that $\kappa(\mathcal{T})=3/4$,
and $\mathcal{T}$ has maximal diameter as follows:
It is trivial that
\begin{equation}\label{eq:diamtrif}
\diam \mathcal{T}=4.
\end{equation}
We calculate the asymptotic mean curvature.
Since $\mathcal{T}$ is Eulerian,
the formula \eqref{eq:Euler} implies 
\begin{equation}\label{eq:triforceP}
\mathcal{P}(x_1,z)= \begin{cases}
		\cfrac{1}{2} & \text{if $z=x_2$ or $x_6$},\\
		0 & \text{otherwise},
		\end{cases}\quad
		\mathcal{P}(x_2,z)= \begin{cases}
		\cfrac{1}{4} & \text{if $z=x_1$ or $x_3$ or $x_4$ or $x_6$},\\
		0 & \text{otherwise},
		\end{cases}
\end{equation}
for instance.
From straightforward computations,
it follows that
\begin{equation}\label{eq:lambdatrif}
\mathcal{H}_{x_i}=\olarrow{\mathcal{H}}_{x_i}=-\frac{3}{2},\quad \Lambda=3
\end{equation}
for all $i$.
Let us check that $\kappa(\mathcal{T})=3/4$.
To do so,
by the symmetry,
it is enough to calculate $\kappa(x_1,x_2),\,\kappa(x_2,x_6)$ and $\kappa(x_2,x_3)$.
We here only present the calculation for $\kappa(x_1,x_2)$,
and the others are left to the reader.
By \eqref{eq:triforceP},
\begin{align*}
\nu^\epsilon_{x_1}(z) = \begin{cases}
		1 - \epsilon & \text{if $z= x$,}\\
		\cfrac{\epsilon}{2} & \text{if $z=x_2$ or $x_6$,}\\
		0 & \text{otherwise},
		\end{cases}\quad 
\nu^\epsilon_{x_2}(z) = \begin{cases}
		1 - \epsilon & \text{if $z= y$,}\\
		\cfrac{\epsilon}{4} & \text{if $z=x_1$ or $x_3$ or $x_4$ or $x_6$,}\\
		0 & \text{otherwise}.
		\end{cases}
\end{align*}
We define a coupling $\pi$ of $(\nu^{\epsilon}_{x_1},\nu^{\epsilon}_{x_2})$ by
\begin{align*}
\pi(z,w):=\begin{cases}
		1 - \epsilon-\cfrac{\epsilon}{2} & \text{if $(z,w)=(x_1,x_2)$},\\
		\cfrac{\epsilon}{4} & \text{if $(z,w)=(x_1,x_1)$ or $(x_1,x_3)$ or $(x_6,x_4)$ or $(x_6,x_{6})$},\\
		0 & \text{otherwise}. 
		\end{cases}
\end{align*}
Then one can prove $W(\nu^{\epsilon}_{x_1},\nu^{\epsilon}_{x_2})\leq 1-3\epsilon/4$ by \eqref{eq:Wasserstein distance},
and hence $\kappa(x_1,x_2)\geq 3/4$.
To check the opposite inequality,
we define a $1$-Lipschitz function $f:V\to \mathbb{R}$ by
\begin{align*}
f(z):=\begin{cases}
		2 & \text{if $z = x_3$},\\
		1 & \text{if $z=x_2$},\\
		-1 & \text{if $z = x_6$},\\
		0 & \text{otherwise}.
		\end{cases}
\end{align*}
Applying Proposition \ref{prop:Kantorovich duality} to the function $f$,
we obtain $\kappa(x_1,x_2)\leq 3/4$.
This proves $\kappa(x_1,x_2)=3/4$.
Notice that
in the verification of $\kappa(x_2,x_6)=3/4$,
we use
\begin{align*}
\pi(z,w)&:=\begin{cases}
		1 - \epsilon-\cfrac{\epsilon}{4} & \text{if $(z,w)=(x_2,x_6)$},\\
		\cfrac{\epsilon}{4} & \text{if $(z,w)=(x_1,x_1)$ or $(x_2,x_2)$ or $(x_3,x_5)$ or $(x_4,x_4)$ or $(x_6,x_{6})$},\\
		0 & \text{otherwise},
		\end{cases}\\
		f(z)&:=\begin{cases}
		2 & \text{if $z = x_5$},\\
		1 & \text{if $z=x_4$ or $x_6$},\\
		0 & \text{otherwise}.
		\end{cases}
\end{align*}
Also,
in the verification of $\kappa(x_2,x_3)=3/4$,
\begin{align*}
\pi(z,w)&:=\begin{cases}
		1 - \epsilon-\cfrac{\epsilon}{2} & \text{if $(z,w)=(x_2,x_3)$},\\
		\cfrac{\epsilon}{4} & \text{if $(z,w)=(x_1,x_3)$ or $(x_3,x_3)$ or $(x_4,x_4)$ or $(x_6,x_4)$},\\
		0 & \text{otherwise},
		\end{cases}\\
		f(z)&:=\begin{cases}
		2 & \text{if $z = x_4$},\\
		1 & \text{if $z=x_3$ or $x_6$},\\
		0 & \text{otherwise}.
		\end{cases}
\end{align*}
Thus we arrive at
\begin{equation}\label{eq:Ricctrif}
\kappa(\mathcal{T})=\frac{3}{4},\quad K=\frac{3}{4}.
\end{equation}
Combining \eqref{eq:diamtrif}, \eqref{eq:lambdatrif} and \eqref{eq:Ricctrif},
we conclude that $\mathcal{T}$ has maximal diameter.

\begin{rem}
We consider the unweighted directed multi triforce graph (see Figure \ref{fig:multitriforce}).
\begin{figure}[http]
        \begin{center} 
          \includegraphics[scale=0.5]{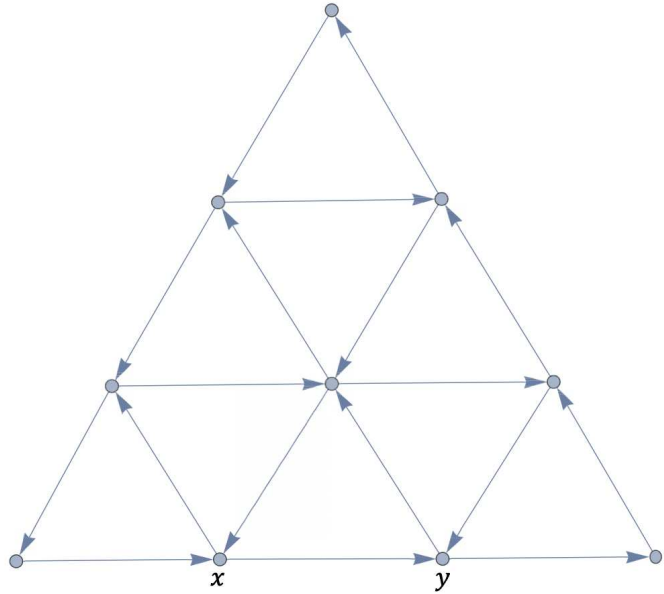}
          \caption{Directed multi triforce graph}
          \label{fig:multitriforce}
        \end{center}
\end{figure}
Then we see $\kappa(x,y)=0$;
in particular,
it does not have positive Ricci curvature.
\end{rem}

%%%%%%%%%%%%%%%%%%%%%%%%%%%%
\subsection{Product and maximal diameter}\label{sec:Product and maximal diameter}

We consider two simple, strongly connected, finite weighted directed graphs $(V,\mu)$ and $(V',\mu')$,
and also 
their $(\alpha,\beta)$-weighted Cartesian product $(V, \mu) \square_{(\alpha,\beta)} (V', \mu')$.
Let us denote by $D,\,D'$ and $D_{(\alpha,\beta)}$ the diameter of $(V,\mu),\,(V',\mu')$ and $(V, \mu) \square_{(\alpha,\beta)} (V', \mu')$,
respectively.
We also denote by $\Lambda,\,\Lambda'$ and $\Lambda_{(\alpha,\beta)}$ the value defined as \eqref{eq:supconst} of $(V,\mu),\,(V',\mu')$ and $(V, \mu) \square_{(\alpha,\beta)} (V', \mu')$,
respectively.
Furthermore,
We denote by $K,\,K'$ and $K_{(\alpha,\beta)}$ the value done by \eqref{eq:infconst} of $(V,\mu),\,(V',\mu')$ and $(V, \mu) \square_{(\alpha,\beta)} (V', \mu')$,
respectively.

\begin{lem}\label{lem:advproduct formula}
\begin{equation*}
D_{(\alpha,\beta)}=D+D',\quad \Lambda_{(\alpha,\beta)}=\frac{\beta}{\alpha+\beta}\Lambda+\frac{\alpha}{\alpha+\beta}\Lambda',\quad K_{(\alpha,\beta)}=\min \left\{ \frac{\beta}{\alpha+\beta}K, \frac{\alpha}{\alpha+\beta}K'  \right\}.
\end{equation*}
\end{lem}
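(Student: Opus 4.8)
The plan is to verify the three identities separately, each one resting on a structural result on the $(\alpha,\beta)$-weighted Cartesian product recalled in Section~\ref{sec:Products}.

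First, for the diameter I would appeal directly to the distance formula \eqref{eq:product distance}. Since $d_{(\alpha,\beta)}(\mathbf{x},\mathbf{y})=d(x,y)+d'(x',y')$ for $\mathbf{x}=(x,x')$ and $\mathbf{y}=(y,y')$, and since the two summands range independently over $V\times V$ and $V'\times V'$, the supremum defining $D_{(\alpha,\beta)}$ splits as the sum of the two separate suprema, giving $D_{(\alpha,\beta)}=D+D'$.

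Second, for the mixed asymptotic mean curvature I would apply the third identity in Proposition~\ref{prop:product mean curv}, namely $\mathcal{H}_{(\alpha,\beta)}(\mathbf{x},\mathbf{y})=\frac{\beta}{\alpha+\beta}\mathcal{H}(x,y)+\frac{\alpha}{\alpha+\beta}\mathcal{H}'(x',y')$. Because the coefficients $\frac{\beta}{\alpha+\beta}$ and $\frac{\alpha}{\alpha+\beta}$ are positive and the two terms again depend on disjoint sets of variables, taking the supremum over $\mathbf{x},\mathbf{y}\in V\times V'$ distributes over the sum and yields $\Lambda_{(\alpha,\beta)}=\frac{\beta}{\alpha+\beta}\Lambda+\frac{\alpha}{\alpha+\beta}\Lambda'$.

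The substantive part, and the step I expect to require the most care, is the formula for $K_{(\alpha,\beta)}$, since here the infimum is taken only over \emph{edges} of the product, not over all pairs of vertices. The key observation is that the edge weight $\mu_{(\alpha,\beta)}$ carries the Kronecker factors $\delta_{x'}(y')$ and $\delta_x(y)$, so a directed edge $\mathbf{x}\rarrow\mathbf{y}$ can change only one coordinate at a time: either $x\rarrow y$ with $x'=y'$, or $x=y$ with $x'\rarrow y'$. Consequently case (1) of Theorem~\ref{thm:product Ricci} never occurs for an actual edge, and every edge falls into case (2) or case (3). Applying those two cases, I would find $\kappa_{(\alpha,\beta)}(\mathbf{x},\mathbf{y})=\frac{\beta}{\alpha+\beta}\kappa(x,y)$ on edges of the first type and $\kappa_{(\alpha,\beta)}(\mathbf{x},\mathbf{y})=\frac{\alpha}{\alpha+\beta}\kappa'(x',y')$ on edges of the second type. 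Since the first value is independent of the common coordinate $x'$ and the second of the common coordinate $x$, the infimum over each family reduces to $\frac{\beta}{\alpha+\beta}\inf_{x\rarrow y}\kappa(x,y)=\frac{\beta}{\alpha+\beta}K$ and to $\frac{\alpha}{\alpha+\beta}\inf_{x'\rarrow y'}\kappa'(x',y')=\frac{\alpha}{\alpha+\beta}K'$, respectively. Taking the infimum over all edges is therefore the minimum of these two quantities, which is exactly the claimed formula.
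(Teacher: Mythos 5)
Your proposal is correct and follows essentially the same route as the paper: the $D$ and $\Lambda$ identities are read off from \eqref{eq:product distance} and \eqref{eq:product mean curv}, and the $K$ identity is obtained by splitting the edges of the product into the two one-coordinate families and applying cases (2) and (3) of Theorem \ref{thm:product Ricci}. Your explicit remark that the Kronecker factors in $\mu_{(\alpha,\beta)}$ force every edge to change exactly one coordinate is the (implicit) justification behind the paper's first equality for $K_{(\alpha,\beta)}$, so nothing is missing.
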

\begin{proof}
The assertions for $D$ and $\Lambda$ are immediately derived from \eqref{eq:product distance} and \eqref{eq:product mean curv}.
For $K$,
Theorem \ref{thm:product Ricci} implies
\begin{align*}
K_{(\alpha,\beta)}&=\min \left\{ \inf_{x\rarrow y,\,x'\in V'}\kappa_{(\alpha,\beta)}((x,x'),(y,x')), \inf_{x\in V,\,x'\rarrow y'}\kappa_{(\alpha,\beta)}((x,x'),(x,y'))   \right\}\\
&=\min \left\{ \frac{\beta}{\alpha+\beta}\inf_{x\rarrow y}\kappa(x,y), \frac{\alpha}{\alpha+\beta}\inf_{x'\rarrow y'}\kappa'(x',y')   \right\}=\min \left\{ \frac{\beta}{\alpha+\beta}K, \frac{\alpha}{\alpha+\beta}K'  \right\}.
\end{align*}
This proves the lemma.
\end{proof}

The following claim together with the observation in Subsections \ref{sec:Directed complete} and \ref{sec:triforce} enables us to produce new directed graphs having maximal diameter:
\begin{thm}\label{thm:maxdiamproduct}
We assume $K,K'>0$.
Then the following are equivalent:
\begin{enumerate}\setlength{\itemsep}{+0.7mm}
\item The equality \eqref{eq:equalBM} holds on $(V,\mu)$ and $(V', \mu')$, and $\alpha D \Lambda'=\beta D' \Lambda$;\label{enum:preproduct}
\item the equality \eqref{eq:equalBM} holds on $(V, \mu) \square_{(\alpha,\beta)} (V', \mu')$.\label{enum:afterproduct}
\end{enumerate}
\end{thm}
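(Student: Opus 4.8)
The plan is to reduce everything to the three product formulas of Lemma~\ref{lem:advproduct formula} together with the Bonnet--Myers bound of Theorem~\ref{thm:BM}. Write $a := \frac{\beta}{\alpha+\beta}$ and $b := \frac{\alpha}{\alpha+\beta}$, so $a+b=1$, and record $D_{(\alpha,\beta)} = D + D'$, $\Lambda_{(\alpha,\beta)} = a\Lambda + b\Lambda'$, $K_{(\alpha,\beta)} = \min\{aK,bK'\}$. Since $K,K'>0$, Theorem~\ref{thm:BM} gives $D \le \Lambda/K$ and $D' \le \Lambda'/K'$ on the factors and $D_{(\alpha,\beta)} \le \Lambda_{(\alpha,\beta)}/K_{(\alpha,\beta)}$ on the product, so that the equality \eqref{eq:equalBM} on the product is the single scalar identity
\begin{equation*}
D + D' = \frac{a\Lambda + b\Lambda'}{\min\{aK,bK'\}}.
\end{equation*}
A preliminary observation organizes both directions: the side condition $\alpha D \Lambda' = \beta D' \Lambda$ rewrites as $b D \Lambda' = a D' \Lambda$, and once $\Lambda = DK$ and $\Lambda' = D'K'$ hold (the two factorwise equalities), this is equivalent to $aK = bK'$ after cancelling $DD'>0$. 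Thus the real content of (1) is ``both factors are extremal, and $aK=bK'$.''

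For (1)$\Rightarrow$(2) I would substitute $\Lambda = DK$, $\Lambda' = D'K'$ into the product expression and use $aK=bK'$ (extracted from the side condition as above). Then $\min\{aK,bK'\} = aK$, and the numerator becomes $aDK + bD'K' = aK(D+D')$ because $bK' = aK$, so the quotient collapses to $D+D' = D_{(\alpha,\beta)}$. This is the desired product equality, and the direction involves no obstruction.

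The substantive direction is (2)$\Rightarrow$(1), where one scalar equality must force three separate facts. I would split on which term realizes the minimum; by the symmetry under swapping the two factors together with $\alpha \leftrightarrow \beta$, it suffices to treat $aK \le bK'$, so the product equality reads $D + D' = \Lambda/K + b\Lambda'/(aK)$. The key step is that $aK \le bK'$ yields $b\Lambda'/(aK) \ge \Lambda'/K'$, which with the factor bound $\Lambda'/K' \ge D'$ gives $b\Lambda'/(aK) \ge D'$; hence $D = \Lambda/K + \bigl(b\Lambda'/(aK) - D'\bigr) \ge \Lambda/K$, and together with $D \le \Lambda/K$ this forces $D = \Lambda/K$. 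Substituting back forces $D' = b\Lambda'/(aK)$, so the sandwich $D' \le \Lambda'/K' \le b\Lambda'/(aK) = D'$ collapses, giving both $D' = \Lambda'/K'$ and $\Lambda'/K' = b\Lambda'/(aK)$; since $\Lambda' \ge 2 > 0$ the latter is exactly $aK = bK'$. With the two factorwise equalities and $aK = bK'$ in hand, the preliminary observation runs in reverse to recover $\alpha D \Lambda' = \beta D' \Lambda$, establishing (1).

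The main obstacle is organizational rather than computational: drawing three independent conclusions out of one equation. The mechanism that makes it work is that the chain $D_{(\alpha,\beta)} \le \Lambda_{(\alpha,\beta)}/K_{(\alpha,\beta)}$ is saturated only when none of the three inequalities $D \le \Lambda/K$, $D' \le \Lambda'/K'$, and $\Lambda'/K' \le b\Lambda'/(aK)$ has slack, and the case split on the minimum is precisely what isolates these three as the ones forced to be tight. I would also note for safety that $D,D'>0$, needed to cancel $DD'$ in the translation of the side condition, which is automatic once $K$ is finite and positive, as there must then be at least one edge.
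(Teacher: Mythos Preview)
Your proof is correct and follows essentially the same route as the paper, reducing both directions to Lemma~\ref{lem:advproduct formula} and the factorwise Bonnet--Myers bounds of Theorem~\ref{thm:BM}. The only difference is organizational: for $(2)\Rightarrow(1)$ you split on which of $aK$, $bK'$ realizes the minimum and then squeeze a chain of inequalities tight, whereas the paper uses both bounds $K_{(\alpha,\beta)}\le aK\le a\Lambda/D$ and $K_{(\alpha,\beta)}\le bK'\le b\Lambda'/D'$ symmetrically to obtain $\alpha D\Lambda'\le\beta D'\Lambda$ and its reverse directly, thereby avoiding the case distinction.
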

\begin{proof}
The implication from (\ref{enum:preproduct}) to (\ref{enum:afterproduct}) is a direct consequence of Lemma \ref{lem:advproduct formula}.
Let us prove the opposite direction.
By virtue of Lemma \ref{lem:advproduct formula}, the assumption, and Theorem \ref{thm:BM},
\begin{equation}\label{eq:productpf1}
\frac{\displaystyle\frac{\beta}{\alpha+\beta}\Lambda+\frac{\alpha}{\alpha+\beta}\Lambda'}{D+D'}
=\frac{\Lambda_{(\alpha,\beta)}}{D_{(\alpha,\beta)}}=K_{(\alpha,\beta)}\leq \frac{\beta}{\alpha+\beta}K\leq \frac{\beta}{\alpha+\beta}\frac{\Lambda}{D},
\end{equation}
and hence $\alpha D \Lambda' \leq \beta D' \Lambda$.
Similarly,
\begin{equation}\label{eq:productpf2}
\frac{\displaystyle\frac{\beta}{\alpha+\beta}\Lambda+\frac{\alpha}{\alpha+\beta}\Lambda'}{D+D'}
=\frac{\Lambda_{(\alpha,\beta)}}{D_{(\alpha,\beta)}}=K_{(\alpha,\beta)}\leq \frac{\alpha}{\alpha+\beta}K'\leq \frac{\alpha}{\alpha+\beta}\frac{\Lambda'}{D'},
\end{equation}
and hence $\alpha D \Lambda' \geq \beta D' \Lambda$.
It follows that $\alpha D \Lambda' = \beta D' \Lambda$.
Furthermore,
the inequalities in \eqref{eq:productpf1}, \eqref{eq:productpf2} become equalities.
Thus we conclude the desired assertion.
\end{proof}

Cushing et al. \cite{CKKLMP} have proved Theorem \ref{thm:maxdiamproduct} for unweighted, undirected regular graphs (see \cite[Theorem 3.2]{CKKLMP}).

\begin{ex}
For example,
for $\alpha_1,\dots,\alpha_m>0$, the Cartesian product
\begin{equation*}
\{(\mathcal{K}_3\square_{(\alpha_1,\alpha_1)} \mathcal{K}_3)\square_{(\alpha_2,2\alpha_2)}\mathcal{K}_3\}\square_{(\alpha_3,3\alpha_3)} \cdots \square_{(\alpha_m,m\alpha_m)}\mathcal{K}_3
\end{equation*}
has maximal diameter,
whose diameter is $2(m+1)$, value defined as \eqref{eq:supconst} is $3$, and value defined as \eqref{eq:infconst} is $3/2(m+1)$.
\end{ex}

%%%%%%%%%%%%%%%%%%%%%%%%%%%
%%%%%%%%%%%%%%%%%%%%%%%%%%%
%%%%%%%%%%%%%%%%%%%%%%%%%%%
\subsection*{{\rm Acknowledgements}}
The authors thank Professor Takashi Shioya for informing them of \cite{M}.
They are also grateful to Doctor Daisuke Kazukawa for his useful comments.
The first named author was supported in part by JSPS KAKENHI (19K14532).
The first and second named authors were supported in part by JSPS Grant-in-Aid for Scientific Research on Innovative Areas ``Discrete Geometric Analysis for Materials Design" (17H06460).
The third named author was supported in part by JSPS KAKENHI (19K23411).

%%%%%%%%%%%%%%%%%%%%%%%%%%%%

\end{document}